\newtheorem{theorem}{Theorem}[section]
\newtheorem{proposition}[theorem]{Proposition}
\newtheorem{corollary}[theorem]{Corollary}
\newtheorem{lemma}[theorem]{Lemma}
\newtheorem{remark}[theorem]{Remark}
\numberwithin{equation}{section}
\numberwithin{theorem}{section}
\newcommand{\dis}{\displaystyle}
\newcommand{\R}{\mathbb{R}}
\newcommand{\C}{\mathbb{C}}
\newcommand{\Z}{\mathbb{Z}}
\newcommand{\N}{\mathbb{N}}
\newcommand{\Real}{\operatorname{Re}}
\newcommand{\pochhammer}[2][n]{\left(#2\right)_{#1}}
\newcommand{\Hypergeometric}[5][x]{{}_{#2} F_{#3} \left( \left.\begin{matrix} #4 \\ #5 \end{matrix} \, \right| \, #1\right)}
\newcommand{\MeijerG}[5][x]{G_{#3}^{\,#2}\left(\left.\begin{matrix} #4 \\ #5 \end{matrix} \, \right| \, #1\right)}
\newcommand{\ceil}[1]{\left\lceil #1 \right\rceil}
\newcommand{\seq}[2][n\in\N]{\left(#2\right)_{#1}}
\newcommand{\n}{\vec{n}}
\newcommand{\e}{\mathrm{e}}
\newcommand{\Functional}[2]{#1\left[#2\right]}
\newcommand{\StieltjesRogersPoly}[3][r]{S_{#2}^{\,(#1)}\left(#3\right)}
\newcommand{\modifiedStieltjesRogersPoly}[4][r]{S_{#2}^{\,(#1;\,#3)}\left(#4\right)}
\newcommand{\generalisedStieltjesRogersPolyTypeJ}[5][r]{S_{#2,#3}^{\,(#1;\,#4)}\left(#5\right)}
\newcommand{\JacobiRogersPoly}[3][r]{J_{#2}^{\,(#1)}\left(#3\right)}
\newcommand{\generalisedJacobiRogersPoly}[4][r]{J_{#2,#3}^{\,(#1)}\left(#4\right)}
\setlist[itemize]{leftmargin=*,topsep=0pt,itemsep=6pt}
\setlist[enumerate]{leftmargin=*,topsep=0pt,itemsep=6pt}
\begin{document}
\title{Bidiagonal matrix factorisations associated with symmetric multiple orthogonal polynomials and lattice paths}
\author{H\'elder Lima
\thanks{CIDMA, Departamento de Matemática, Universidade de Aveiro, 3810-193 Aveiro, Portugal
\\ \hspace*{0.5 cm} helder.lima@ua.pt }}
\date{Published in the OPSFA17 special issue of \textit{Numerical Algorithms}}
\maketitle
\vspace*{-0.75 cm}
\thispagestyle{empty}
	
\begin{abstract}
The central object of study in this paper are infinite banded Hessenberg matrices admitting factorisations as products of bidiagonal matrices. 
In the two main novel results of this paper, we show that these Hessenberg matrices are associated with the decomposition of $(r+1)$-fold symmetric $r$-orthogonal polynomials and are the production matrices of the generating polynomials of $r$-Dyck paths.

We combine the aforementioned bidiagonal matrix factorisations and the recently found connection of multiple orthogonal polynomials with lattice paths and branched continued fractions to study $(r+1)$-fold symmetric $r$-orthogonal polynomials on a star-like set of the complex plane and their decomposition via multiple orthogonal polynomials on the positive real line.
As an explicit example, we give formulas as terminating hypergeometric series for the Appell sequences of $(r+1)$-fold symmetric $r$-orthogonal polynomials on a star-like set and show that the densities of their orthogonality measures can be expressed via Meijer G-functions on the positive real line.
\end{abstract}

\noindent\textbf{Keywords:} 
\textit{Hessenberg matrices, bidiagonal matrices, multiple orthogonal polynomials, $m$-fold symmetry, production matrices, lattice paths.}

\noindent\textbf{MSC2020:} 
15A23, 33C45, 42C05 (primary); 05A19, 15B48, 30C15, 30E05, 33C20 (secondary).

\noindent\textbf{Acknowledgements:}
I am very grateful to Walter Van Assche for several enlightening discussions about the investigation presented here and for numerous pertinent suggestions that considerably improved this paper, to Alan Sokal for kindly sharing with me a draft version of \cite{AlanEtAl-LPandBCF3}, to Arno Kuijlaars for drawing my attention to \cite{DelvauxLopezHighOrder3TermRec}, and to Ana Foulqui\'e Moreno for helpful comments about some of the changes made from the first available version of this paper.

\noindent\textbf{Funding:}
This research was supported by the Flemish Research Foundation (FWO) project G0C9819N, the Belgian ``Excellence of Science" project PRIMA 30889451, and the Portuguese Foundation for Science and Technology (FCT) projects 10.54499/UIDB/04106/2020 and 10.54499/UIDP/04106/2020.



\section{Introduction}
The central object of study in this paper are infinite matrices $\mathrm{H}^{(r;j)}$, with $r\in\Z^+$ and $0\leq j\leq r$, admitting factorisations 
\begin{equation}
\label{products of lower bidiagonal matrices with an upper bidiagonal matrix in the middle}
\mathrm{H}^{(r;j)}=\mathrm{L}_{j+1}\cdots\mathrm{L}_r\,\mathrm{U}\,\mathrm{L}_1\cdots\mathrm{L}_j,
\end{equation} 
as products of $r$ lower-bidiagonal matrices $\mathrm{L}_1,\cdots,\mathrm{L}_r$ and an upper-bidiagonal matrix $\mathrm{U}$ of the form 
\begin{equation}
\label{bidiagonal matrices U and L_k}
\mathrm{L}_k=
\begin{bmatrix} 
1\\
\alpha_k & 1  \\
& \alpha_{r+1+k} & 1  \\
& & \ddots & \ddots
\end{bmatrix}
\text{ for }1\leq k\leq r,
\quad\text{and}\quad
\mathrm{U}=
\begin{bmatrix} 
\alpha_0 & 1  \\
& \alpha_{r+1} & 1  \\
& & \alpha_{2(r+1)} & 1 \\ 
& & & \ddots & \ddots
\end{bmatrix},
\end{equation}
whose nontrivial entries are given by a sequence $\seq[n\in\N]{\alpha_n}$. 
The matrices $\mathrm{H}^{(r;j)}$ are $(r+2)$-banded unit-lower-Hessenberg matrices, i.e., all their nonzero entries are located on the supradiagonal, the main diagonal, and the first $r$ subdiagonals, with all entries on the supradiagonal equal to $1$.

(Positive) bidiagonal factorisations of Hessenberg matrices (or, more generally, banded matrices) associated with (mixed) multiple orthogonal polynomials is a timely topic of research, as it can be seen in \cite{AnaAmilcarManuel-SpectralTheory...,AnaAmilcarManuel-Oscillatory...,AnaAmilcarManuel-Positive...}.

In the two main novel results of this paper, both presented in Section \ref{Bidiagonal factorisations for Hessenberg matrices}, we show that the banded Hessenberg matrices satisfying the bidiagonal factorisations \eqref{products of lower bidiagonal matrices with an upper bidiagonal matrix in the middle} are the recurrence matrices for the components of the decomposition of $(r+1)$-fold symmetric $r$-orthogonal polynomials and are the production matrices of the generating polynomials of partial $r$-Dyck paths. 
Using these results, we can apply the connection of multiple orthogonal polynomials with lattice paths and branched continued fractions, introduced in \cite{AlanSokal.MOP-ProdMat-BCF} and further explored in \cite{HypergeometricMOP+BCF}, to study $(r+1)$-fold symmetric $r$-orthogonal polynomials.
In Section \ref{Symmetric MOP}, we revisit known results about the location of the zeros of these polynomials and their multiple orthogonality with respect to measures supported on a star-like set of the complex plane, give combinatorial interpretations for the moments of their dual sequence, find orthogonality measures on the positive real line for the components of their decomposition, and present an explicit example of $(r+1)$-fold symmetric $r$-orthogonal polynomials. The latter form Appell polynomial sequences, can be represented as terminating hypergeometric series, and satisfy orthogonality conditions with respect to measures that can be expressed via Meijer G-functions.

\section{Background}
\label{Background}
\subsection{Multiple orthogonality and $m$-fold symmetry}
\label{Background MOP}
Multiple orthogonal polynomials are a generalisation of conventional orthogonal polynomials, satisfying orthogonality relations with respect to several measures instead of just one.
We give a brief introduction to this topic, presenting some basic definitions and notation that we use throughout the paper.
See \cite[Ch.~23]{IsmailBook}, \cite[Ch.~4]{NikishinSorokinBook}, and \cite[\S 3]{WalterSurveyAIMS2018} for more information about multiple orthogonal polynomials.

There are two types of multiple orthogonal polynomials: type I and type II. 
Both type I and type II polynomials satisfy orthogonality conditions with respect to a system of $r$ measures (or linear functionals), for a positive integer $r$, and depend on a multi-index $\n=\left(n_1,\cdots,n_r\right)\in\N^r$ of norm $|\n|=n_1+\cdots+n_r$. 
Here and always throughout this paper, we consider $0\in\N$.
Both type I and type II polynomials reduce to conventional orthogonal polynomials when the number of orthogonality measures (or functionals) is equal to $1$.

In this paper we focus only on type II multiple orthogonal polynomials for multi-indices on the so-called step-line.
A multi-index $\left(n_1,\cdots,n_r\right)\in\N^r$ is on the \textit{step-line} if $n_1\geq n_2\geq\cdots\geq n_r\geq n_1-1$.
For a fixed $r\in\Z^+$, there is a unique multi-index of norm $n$ on the step-line of $\N^r$ for each $n\in\N$: 
if $n=rk+j$ with $k\in\N$ and $0\leq j\leq r-1$, then the multi-index of norm $n$ on the step-line is of the form
$\vec{n}=\left(k+1,\cdots,k+1,k,\cdots,k\right)$, with $j$ entries equal to $k+1$ and $r-j$ entries equal to $k$.

The degree of a type II multiple orthogonal polynomial is equal to the norm of its multi-index. 
Therefore, the type II polynomials on the step-line form a sequence with exactly one polynomial of degree $n$, for each $n\in\N$, and we can replace the multi-index by the degree of the polynomial without any ambiguity.

The type II multiple orthogonal polynomials on the step-line of $\N^r$ are often referred to as $r$-orthogonal polynomials \cite{MaroniOrthogonalite}, where $r$ is the number of orthogonality measures (or functionals).
A polynomial sequence $\seq{P_n(x)}$ is \textit{$r$-orthogonal}, i.e., $\seq{P_n(x)}$ is the sequence of type II multiple orthogonal polynomials on the step-line, with respect to a system of measures $\left(\mu_1,\cdots,\mu_r\right)$ if it satisfies
\begin{equation}
\label{d-orthogonality conditions}
\int x^kP_n(x)\mathrm{d}\mu_j(x)
=\begin{cases}
N_n\neq 0 & \text{ if } n=rk+j-1, \\
  \hfil 0 & \text{ if } n\geq rk+j,
\end{cases}
\quad\text{for all }1\leq j\leq r.
\end{equation}
Multiple orthogonality conditions can also be written using linear functionals in a generic field $\mathbb{K}$ instead of measures on $\R$ or $\C$.
A linear functional on $\mathbb{K}[x]$ is a linear map $u:\mathbb{K}[x]\to\mathbb{K}$.
The action of a linear functional $u$ on a polynomial $f\in\mathbb{K}[x]$ is denoted by $\Functional{u}{f}$.
For $n\in\N$, the moment of order $n$ of a linear functional $u$ is $\Functional{u}{x^n}$.
By linearity, every linear functional $u$ is uniquely determined by its moments. 
A polynomial sequence $\seq{P_n(x)}$ is $r$-orthogonal with respect to a system of linear functionals $\left(v_1,\cdots,v_r\right)$ if \eqref{d-orthogonality conditions} holds with $\dis\int{x^kP_n(x)\mathrm{d}\mu_j(x)}$ replaced by $\Functional{v_j}{x^k\,P_n(x)}$ for all values of $k,n,j$. 

A sequence of linear functionals $\seq[k\in\N]{u_k}$ is the \textit{dual sequence} of a polynomial sequence $\seq{P_n(x)}$ if 
\begin{equation}
\label{dual sequence}
\Functional{u_k}{P_n}=\delta_{k,n}=
\begin{cases} 
1 & \text{if }k=n, \\ 
0 & \text{if }k\neq n. 
\end{cases}
\end{equation}
If $\seq{P_n(x)}$ is a $r$-orthogonal polynomial sequence with dual sequence $\seq[k\in\N]{u_k}$, then $\seq{P_n(x)}$ is $r$-orthogonal with respect to $\left(u_0,\cdots,u_{r-1}\right)$. 
Based on \cite[Thm.~2.1]{MaroniOrthogonalite}, a polynomial sequence $\seq{P_n(x)}$ is $r$-orthogonal if and only if it satisfies a $(r+1)$-order recurrence relation of the form
\begin{equation}
\label{recurrence relation for a r-OPS}
P_{n+1}(x)=x\,P_n(x)-\sum_{k=0}^{\min(n,r)}\gamma_{n-k}^{\,[k]}\,P_{n-k}(x).
\end{equation}

The recurrence coefficients in \eqref{recurrence relation for a r-OPS} are collected in the infinite $(r+2)$-banded unit-lower-Hessenberg matrix $\mathrm{H}=\seq[n,k\in\N]{h_{n,k}}$, whose nonzero entries are
$h_{n,n+1}=1$ and $h_{n+k,n}=\gamma^{[k]}_n$ for all $n\in\N$ and $0\leq k\leq r$.
%
The $r$-orthogonal polynomials $\seq{P_n(x)}$ are the characteristic polynomials of the $(n\times n)$-matrices $\mathrm{H}_n$ formed by the first $n$ rows and columns of $\mathrm{H}$ (see \cite[\S 2.2]{WalterCoussementGaussianQuadMOP}), i.e., 
$P_n(x)=\det\left(x\,\mathrm{I}_n-\mathrm{H}_n\right)$
for any $n\geq 1$,
where $\mathrm{I}_n$ denotes the $(n\times n)$-identity matrix.


For $m\geq 2$, a polynomial sequence $\seq{P_n(x)}$ is \textit{$m$-fold symmetric} if
\begin{equation}
\label{m-fold symmetry definition}
P_n\left(\e^{\frac{2\pi i}{m}}x\right)=\e^{\frac{2n\pi i}{m}}P_n(x)
\quad\text{for all }n\in\N.
\end{equation}
Observe that, for $m=2$, \eqref{m-fold symmetry definition} corresponds to the definition of a symmetric polynomial sequence.

The study of multiple orthogonal polynomial sequences satisfying \eqref{m-fold symmetry definition} goes back to works of Douak and Maroni (see, e.g., \cite{DouakandMaroniClassiquesDeDimensionDeux}), where a $d$-symmetric sequence corresponds to what we call here a $(d+1)$-fold symmetric sequence. 
The terminology of $m$-fold symmetry in the context of multiple orthogonality was introduced in \cite{AnaWalter3FoldSym}.

The definition \eqref{m-fold symmetry definition} is equivalent to the existence of $m$ polynomial sequences $\dis\seq{P_n^{[j]}(x)}$, $0\leq j\leq m-1$, 
such that
\begin{equation}
\label{m-fold decomposition}
P_{mn+j}(x)=x^j\,P^{[j]}_n\left(x^m\right)
\quad\text{for all }n\in\N\text{ and }0\leq j\leq m-1.
\end{equation}

Moreover, for $r\in\Z^+$, $\seq{P_n(x)}$ is a $(r+1)$-fold symmetric $r$-orthogonal polynomial sequence if and only if it satisfies a three-term recurrence relation of order $r+1$ of the form
\begin{equation}
\label{recurrence relation for a r+1-fold symmetric r-OPS}
P_{n+r+1}(x)=x\,P_{n+r}(x)-\alpha_n\,P_n(x)
\quad\text{for all }n\in\N,
\end{equation}
with $\alpha_n\neq 0$ for all $n\in\N$ and initial conditions $P_j(x)=x^j$ for $0\leq j\leq r$ (see \cite[Thm.~5.1]{DouakandMaroniClassiquesDeDimensionDeux}).

The corresponding Hessenberg matrix, which we denote by $\mathrm{H}^{(r)}=\seq[n,k\in\N]{h^{(r)}_{m,n}}$, only has nonzero entries in the supradiagonal and in the $r^\text{th}$-subdiagonal, respectively equal to $1$ and to the elements of $\seq{\alpha_n}$, i.e.,
\begin{equation}
\label{Hessenberg matrix r+1-fold sym r-OP entries}
h_{n,n+1}=1
\quad\text{and}\quad
h_{n+r,n}=\alpha_n
\quad\text{for all  }n\in\N,
\quad\text{and}\quad
h_{m,n}=0
\text{ whenever }m-n\not\in\{-1,r\}.
\end{equation}

It is known that a sequence $\seq{P_n(x)}$ satisfying \eqref{recurrence relation for a r+1-fold symmetric r-OPS} with $\alpha_n>0$ for all $n\in\N$, is $r$-orthogonal with respect to a vector of positive measures $\left(\mu_1,\cdots,\mu_r\right)$ supported on the star-like set
\begin{equation}
\label{r+1-star}
\mathrm{St}^{(r+1)}=\bigcup\limits_{k=0}^{r}\left\{\left.x\,\e^{\frac{2\pi ik}{r+1}}\right|x\geq 0\right\},
\end{equation}
and uniquely determined by their densities on the positive real line.
This result was implicitly obtained in \cite{AptKalVanIseGeneticSums}, although the set $\mathrm{St}^{(r+1)}$ is not mentioned therein (see \cite[Eq.~2.15]{WalterNonsymmetric} and \cite[Thm.~1.1]{AptekarevKalyaginSaff3-termRecMOP} for explicit statements of this result).
%
Furthermore, it was shown in \cite[Thm.~2.2]{BenRomdhaneZeros-of-d-Symmetric-d-OP} that the zeros of a sequence $\seq{P_n(x)}$ satisfying \eqref{recurrence relation for a r+1-fold symmetric r-OPS} with $\alpha_n>0$ for all $n\in\N$ are all located on $\mathrm{St}^{(r+1)}$, are invariant under rotations of angle $\frac{2k\pi}{r+1}$ for $1\leq k\leq r$, and satisfy interlacing properties that we revisit in Subsection \ref{Zeros section}.

We refer to $\mathrm{St}^{(r+1)}$ as the $(r+1)$-star.
Note that the ``$2$-star" corresponds to the real line.
In Fig. \ref{3,4,5-star fig.}, we show representations of the $m$-stars for $m\in\{3,4,5\}$.
\begin{figure}[h]
\centering
\begin{tikzpicture}[scale=1]
\draw[->,color=black] (0,0) -- (2,0);
\node[above] at (1,0){$\R^+$};
\draw[->,color=black] (0,0) -- (-1.5,1.5);
\node[above,rotate=315] at (-0.75,0.75){$\e^{\frac{2\pi i}{3}}\R^+$};
\draw[->,color=black] (0,0) -- (-1.5,-1.5);
\node[above,rotate=45] at (-0.75,-0.75){$\e^{\frac{4\pi i}{3}}\R^+$};
\filldraw[black] (0,0) circle (2pt);
\node[below](0,0){$0$};
\node at (0,-2.5) {$\mathrm{St}^{(3)}$};
\end{tikzpicture}
\quad
\begin{tikzpicture}[scale=1]
\draw[->,color=black] (0,0) -- (2,0);
\node[above] at (1,0){$\R^+$};
\draw[->,color=black] (0,0) -- (0,2);
\node[right] at (0,1){$i\R^+$};
\draw[->,color=black] (0,0) -- (-2,0);
\node[above] at (-1,0){$\R^-$};
\draw[->,color=black] (0,0) -- (0,-2);
\node[right] at (0,-1){$i\R^-$};
\filldraw[black] (0,0) circle (2pt);
\node[below right] (0,0) {$0$};
\node at (0,-2.5) {$\mathrm{St}^{(4)}$};
\end{tikzpicture}
\quad
\begin{tikzpicture}[scale=1]
\draw[->,color=black] (0,0) -- (2,0);
\node[above] at (1.5,0){$\R^+$};
\draw[->,color=black] (0,0) -- (0.62,1.9);
\node[right] at (0.46,1.43){$\e^{\frac{2\pi i}{5}}\R^+$};
\draw[->,color=black] (0,0) -- (-1.62,1.18);
\node[above,rotate=324] at (-0.81,0.59){$\e^{\frac{4\pi i}{5}}\R^+$};
\draw[->,color=black] (0,0) -- (-1.62,-1.18);
\node[above,rotate=36] at (-0.81,-0.59){$\e^{\frac{-4\pi i}{5}}\R^+$};
\draw[->,color=black] (0,0) -- (0.62,-1.9);
\node[right] at (0.46,-1.43){$\e^{\frac{-2\pi i}{5}}\R^+$};
\filldraw[black] (0,0) circle (2pt);
\node[below right] (0,0) {$0$};
\node at (0,-2.5) {$\mathrm{St}^{(5)}$};
\end{tikzpicture}
\caption{$3$-star, $4$-star, and $5$-star (from left to right).}
\label{3,4,5-star fig.}
\end{figure}

\subsection{Lattice paths and production matrices}
\label{Background lattice paths and production matrices}
We are interested in two types of lattice paths, $r$-Dyck paths and $r$-Lukasiewick paths, where $r$ is a positive integer, and on generating polynomials of sets of these lattice paths.
We follow the definitions in \cite{AlanEtAl-LPandBCF1,AlanEtAl-LPandBCF2}.

For $r\in\Z^+$, a \textit{$r$-Dyck path} is a path in the lattice $\N\times\N$, starting and ending at height $0$, only using steps $(1,1)$, called \textit{rises}, and $(1,-r)$, called \textit{$r$-falls}.
When $r=1$, the $1$-Dyck paths are simply known as Dyck paths.
More generally, a \textit{partial $r$-Dyck path} is a path in $\N\times\N$ allowed to start and end anywhere, again only using steps $(1,1)$ and $(1,-r)$.

For $r\in\Z^+\cup\{\infty\}$, a \textit{$r$-Lukasiewick path} is a path in $\N\times\N$, starting and ending at height $0$, only using steps $(1,j)$ with $-r\leq j\leq 1$.
A $1$-Lukasiewick path is called a \textit{Motzkin path} and a $\infty$-Lukasiewick path is simply called a \textit{Lukasiewick path}.
Likewise for $r$-Dyck paths, we can consider \textit{partial $r$-Lukasiewick paths}, which are paths in $\N\times\N$, only using steps $(1,j)$ with $-r\leq j\leq 1$ and allowed to start and end anywhere.
Observe that a (partial) $r$-Lukasiewick path without any steps $(1,j)$ with $1-r\leq j\leq 0$ is a (partial) $r$-Dyck path.

In Fig. \ref{2-Dyck path and 3-Lukasiewick path fig.}, we give examples of a $r$-Dyck path and a $r$-Lukasiewick path, with $r=2$ and $r=3$, respectively.
\begin{figure}[ht]
\centering
\begin{tikzpicture}[scale=0.5]
\draw[->,color=black] (0,0) -- (0,5);
\draw[->,color=black] (0,0) -- (13,0);
\filldraw[black] (0,0) circle (4pt);
\draw[-,color=black] (0,0) -- (1,1);
\filldraw[black] (1,1) circle (4pt);
\draw[-,color=black] (1,1) -- (2,2);
\filldraw[black] (2,2) circle (4pt);
\draw[-,color=black] (2,2) -- (3,3);
\filldraw[black] (3,3) circle (4pt);
\draw[-,color=black] (3,3) -- (4,4);
\filldraw[black] (4,4) circle (4pt);
\draw[-,color=black] (4,4) -- (5,2);
\filldraw[black] (5,2) circle (4pt);
\draw[-,color=black] (5,2) -- (6,3);
\filldraw[black] (6,3) circle (4pt);
\draw[-,color=black] (6,3) -- (7,1);
\filldraw[black] (7,1) circle (4pt);
\draw[-,color=black] (7,1) -- (8,2);
\filldraw[black] (8,2) circle (4pt);
\draw[-,color=black] (8,2) -- (9,0);
\filldraw[black] (9,0) circle (4pt);
\draw[-,color=black] (9,0) -- (10,1);
\filldraw[black] (10,1) circle (4pt);
\draw[-,color=black] (10,1) -- (11,2);
\filldraw[black] (11,2) circle (4pt);
\draw[-,color=black] (11,2) -- (12,0);
\filldraw[black] (12,0) circle (4pt);
\end{tikzpicture}
\quad\quad
\begin{tikzpicture}[scale=0.5]
\draw[->,color=black] (0,0) -- (0,5);
\draw[->,color=black] (0,0) -- (14,0);
\filldraw[black] (0,0) circle (4pt);
\draw[-,color=black] (0,0) -- (1,1);
\filldraw[black] (1,1) circle (4pt);
\draw[-,color=black] (1,1) -- (2,2);
\filldraw[black] (2,2) circle (4pt);
\draw[-,color=black] (2,2) -- (3,3);
\filldraw[black] (3,3) circle (4pt);
\draw[-,color=black] (3,3) -- (4,3);
\filldraw[black] (4,3) circle (4pt);
\draw[-,color=black] (4,3) -- (5,1);
\filldraw[black] (5,1) circle (4pt);
\draw[-,color=black] (5,1) -- (6,2);
\filldraw[black] (6,2) circle (4pt);
\draw[-,color=black] (6,2) -- (7,3);
\filldraw[black] (7,3) circle (4pt);
\draw[-,color=black] (7,3) -- (8,4);
\filldraw[black] (8,4) circle (4pt);
\draw[-,color=black] (8,4) -- (9,1);
\filldraw[black] (9,1) circle (4pt);
\draw[-,color=black] (9,1) -- (10,0);
\filldraw[black] (10,0) circle (4pt);
\draw[-,color=black] (10,0) -- (11,1);
\filldraw[black] (11,1) circle (4pt);
\draw[-,color=black] (11,1) -- (12,2);
\filldraw[black] (12,2) circle (4pt);
\draw[-,color=black] (12,2) -- (13,0);
\filldraw[black] (13,0) circle (4pt);
\end{tikzpicture}
\caption{A $2$-Dyck path of length $12$ (left) and a $3$-Lukasiewick path of length $13$ (right).}
\label{2-Dyck path and 3-Lukasiewick path fig.}
\end{figure}
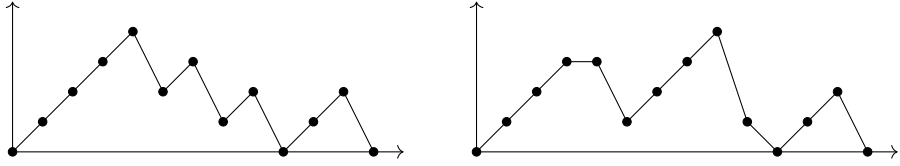

Throughout this paper and as usual, the weight of a lattice path is the product of the weights of its steps and the generating polynomial of a set of lattice paths is the sum of the weights of the paths in the set.

Observe that the length of a $r$-Dyck path is always a multiple of $r+1$.
For any $r\in\Z^+$ and a sequence $\seq[i\in\N]{\alpha_i}$, the \textit{$r$-Stieltjes-Rogers polynomials} $\StieltjesRogersPoly[r]{n}{\mathbf{\alpha}}$, with $n\in\N$, are the generating polynomials of the sets of $r$-Dyck paths of length $(r+1)n$, where each rise has weight $1$ and each $r$-fall to height $i$ has weight $\alpha_i$. 
Analogously, for $r\in\Z^+\cup\{\infty\}$ and sequences $\left(\gamma_i^{[j]}\right)_{i\in\N}$, for $0\leq j\leq r$, the \textit{$r$-Jacobi-Rogers polynomials} $\JacobiRogersPoly[r]{n}{\gamma}$, with $n\in\N$, are the generating polynomials of the sets of $r$-Lukasiewicz paths from $(0,0)$ to $(n,0)$, where each step $(1,1)$ gets weight $1$ and each step $(1,-j)$, with $0\leq j\leq r$, to height $i$ gets weight $\gamma_i^{[j]}$. 

The $r$-Stieltjes-Rogers and $r$-Jacobi-Rogers polynomials were introduced in \cite{AlanEtAl-LPandBCF1} as extensions of the Stieltjes-Rogers and Jacobi-Rogers polynomials, which correspond to the case $r=1$, introduced by Rogers in \cite{Rogers1907} and also studied by Flajolet in \cite{FlajoletContinuedFractions}.
We are interested in further generalisations of these polynomials.

Observe that every vertex $(x,y)$ of a partial $r$-Dyck path starting at $(0,0)$, and in particular its final vertex, satisfies $x=(r+1)m+y$ for some $m\in\N$, where $x$ is the total number of steps and $m$ is the number of $r$-falls. 
For a sequence $\dis\left(\alpha_i\right)_{i\in\N}$, the \textit{generalised $r$-Stieltjes-Rogers polynomials of type $j$}, $\generalisedStieltjesRogersPolyTypeJ{n}{k}{j}{\mathbf{\alpha}}$, with $n,k,j\in\N$, are the generating polynomials of the sets of partial $r$-Dyck paths from $(0,0)$ to $\big((r+1)n+j,(r+1)k+j\big)$, where each rise has weight $1$ and each $r$-fall to height $i$ has weight $\alpha_i$.
These polynomials were introduced in \cite[\S~A.3]{AlanEtAl-LPandBCF3}.
When $k=0$, the generalised $r$-Stieltjes-Rogers polynomials of type $j$ reduce to the \textit{modified $r$-Stieltjes-Rogers polynomials of type $j$}, $\modifiedStieltjesRogersPoly[r]{n}{j}{\mathbf{\alpha}}$, the generating polynomials of the set of partial $r$-Dyck paths from $(0,0)$ to $((r+1)n+j,j)$.
The \textit{generalised $r$-Jacobi-Rogers polynomials}, $\generalisedJacobiRogersPoly[r]{n}{k}{\gamma}$, with $n,k\in\N$, are the generating polynomials of partial $r$-Lukasiewicz paths from $(0,0)$ to $(n,k)$, where each step $(1,1)$ has weight $1$ and each step $(1,-j)$, with $0\leq j\leq r$, to height $i$ has weight $\gamma_i^{[j]}$.

The method of production matrices was originally introduced in \cite{ProductionMatrices2005}.
Let $\mathrm{H}=\left(h_{i,j}\right)_{i,j\in\N}$ be an infinite matrix with entries in a commutative ring $R$.
If all the powers of $\mathrm{H}$ are well-defined (for example, if all rows or columns of $\mathrm{H}$ have finite nonzero entries), we can define an infinite matrix $A=\left(a_{n,k}\right)_{n,k\in\N}$ such that $a_{n,k}=\left(\mathrm{H}^n\right)_{0,k}$.
We call $\mathrm{H}$ the \textit{production matrix} and $A$ the \textit{output matrix}.

Writing out the matrix multiplications explicitly, we get
\begin{equation}
\label{output matrix entries formula}
a_{n,k}=\sum_{i_1,\cdots,i_{n-1}\in\N}\;h_{0,i_1}h_{i_1,i_2}\cdots h_{i_{n-2},i_{n-1}}h_{i_{n-1},k}.
\end{equation}
Therefore, $a_{n,k}$ is the sum of the weights of all $n$-step walks in $\N$ from $i_0=0$ to $i_n=k$, where the weight of a walk is the product of the weights of its steps and a step from $i$ to $j$ has weight $h_{i,j}$.

Based on \cite[Prop.~8.2]{AlanEtAl-LPandBCF1}, the production matrices of $\mathrm{J}^{(r)}=\seq[n,k\in\N]{\generalisedJacobiRogersPoly[r]{n}{k}{\gamma}}$ and $\mathrm{S}^{(r;0)}=\left(\generalisedStieltjesRogersPolyTypeJ[r]{n}{k}{0}{\mathbf{\alpha}}\right)_{n,k\in\N}$, the matrices of generalised $r$-Jacobi-Rogers polynomials and generalised $r$-Stieltjes-Rogers polynomials of type $0$, respectively, are $(r+2)$-banded unit-lower-Hessenberg matrices:
\begin{itemize}
\item 
the production matrix of $\mathrm{J}^{(r)}=\seq[n,k\in\N]{j^{(r)}_{n,k}}$ has nonzero entries
$j^{(r)}_{n,n+1}=1$ and $j^{(r)}_{n+k,n}=\gamma^{[k]}_n$ for all $n\in\N$ and $0\leq k\leq r$,
so it is the recurrence matrix of the polynomial sequence $\seq{P_n(x)}$ satisfying \eqref{recurrence relation for a r-OPS},
\item 
the production matrix of $\mathrm{S}^{(r;0)}$ is $\mathrm{H}^{(r;0)}=\mathrm{L}_1\cdots\mathrm{L}_r\,\mathrm{U}$,
where $\mathrm{L}_1,\cdots,\mathrm{L}_r$ and $\mathrm{U}$ are the infinite bidiagonal matrices in \eqref{bidiagonal matrices U and L_k}.
\end{itemize}

\section{Bidiagonal matrix factorisations for banded Hessenberg matrices}
\label{Bidiagonal factorisations for Hessenberg matrices}
In this section, we present the two main novel results of this paper: 
we show that the banded Hessenberg matrices admitting the bidiagonal matrix factorisation given by \eqref{products of lower bidiagonal matrices with an upper bidiagonal matrix in the middle}-\eqref{bidiagonal matrices U and L_k} are the recurrence matrices for the components of the decomposition of $(r+1)$-fold symmetric $r$-orthogonal polynomials (Theorem \ref{decomposition of a (r+1)-fold sym r-OPS main th.}) 
and that the same Hessenberg matrices are the production matrices of the generating polynomials of partial $r$-Dyck paths (Theorem \ref{production matrix of generalised m-S.R. poly of type j th.}).
Moreover, we give explicit expressions for the nontrivial entries of those Hessenberg matrices, which are the recurrence coefficients of the corresponding $r$-orthogonal polynomial sequences, and present an alternative proof of a total-positivity result for generalised $r$-Stieltjes-Rogers polynomials, firstly obtained in \cite{AlanEtAl-LPandBCF3}.

\subsection{Hessenberg matrices associated with symmetric multiple orthogonal polynomials}
\label{Characterisation of the components}
The components of the decomposition of a $(r+1)$-fold symmetric $r$-orthogonal polynomial sequence $\seq{P_n(x)}$ are also $r$-orthogonal (see \cite[Thm.~5.2]{DouakandMaroniClassiquesDeDimensionDeux}).
In Theorem \ref{decomposition of a (r+1)-fold sym r-OPS main th.}, we show that the recurrence matrices of those components are the banded Hessenberg matrices determined by \eqref{products of lower bidiagonal matrices with an upper bidiagonal matrix in the middle}-\eqref{bidiagonal matrices U and L_k}. 
\begin{theorem}
\label{decomposition of a (r+1)-fold sym r-OPS main th.}
For $r\in\Z^+$ and a sequence $\seq[n\in\N]{\alpha_n}$ of nonzero elements in a field $\mathbb{K}$, let:
\begin{itemize}
\item 
$\seq{P_n(x)}$ be the $(r+1)$-fold symmetric $r$-orthogonal polynomial sequence satisfying the recurrence relation 
\begin{equation}
\label{recurrence relation symmetric r-OP}
P_{n+r+1}(x)=x\,P_{n+r}(x)-\alpha_n\,P_n(x)
\quad\text{for all }n\in\N,
\end{equation}
with initial conditions $P_j(x)=x^j$ for all $0\leq j\leq r$, 
\item 
$\seq{P_n^{[j]}(x)}$, $0\leq j\leq r$, be the components of the $(r+1)$-fold decomposition of $\seq{P_n(x)}$, i.e.,
\begin{equation}
\label{r+1 fold decomposition of a symmetric r-OP}
P_{(r+1)n+j}(x)=x^j\,P_n^{[j]}\left(x^{r+1}\right)
\quad\text{for all }n\in\N\text{ and }0\leq j\leq r.
\end{equation} 
\end{itemize}

Then, for any $0\leq j\leq r$, the recurrence matrix for $\seq{P_n^{[j]}(x)}$ is the infinite $(r+2)$-banded unit-lower-Hessenberg matrix $\mathrm{H}^{(r;j)}$ admitting the factorisation \eqref{products of lower bidiagonal matrices with an upper bidiagonal matrix in the middle}, i.e., $\seq{P_n^{[j]}(x)}$ satisfies the recurrence relation
\begin{equation}
\label{recurrence relation r+1 fold decomposition components matrix form}
x\left[P_n^{[j]}(x)\right]_{n\in\N}
=\mathrm{H}^{(r;j)}\left[P_n^{[j]}(x)\right]_{n\in\N}
=\mathrm{L}_{j+1}\cdots\mathrm{L}_r\,\mathrm{U}\,\mathrm{L}_1\cdots\mathrm{L}_j
\left[P_n^{[j]}(x)\right]_{n\in\N},
\end{equation}
involving the bidiagonal matrices $\mathrm{L}_1,\cdots,\mathrm{L}_r$ and $\mathrm{U}$ in \eqref{bidiagonal matrices U and L_k}.
\end{theorem}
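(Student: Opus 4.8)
### Proof Plan

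The plan is to work directly from the defining recurrence \eqref{recurrence relation symmetric r-OP} for $\seq{P_n(x)}$ and the decomposition \eqref{r+1 fold decomposition of a symmetric r-OP}, and to translate the order-$(r+1)$ three-term recurrence into a genuine $(r+1)$-order recurrence of the form \eqref{recurrence relation for a r-OPS} for each component $\seq{P_n^{[j]}(x)}$, reading off the Hessenberg matrix $\mathrm{H}^{(r;j)}$; then separately I would verify that this matrix equals the bidiagonal product $\mathrm{L}_{j+1}\cdots\mathrm{L}_r\,\mathrm{U}\,\mathrm{L}_1\cdots\mathrm{L}_j$ by a direct (inductive) computation of the product of the bidiagonal factors in \eqref{bidiagonal matrices U and L_k}. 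First I would substitute \eqref{r+1 fold decomposition of a symmetric r-OP} into \eqref{recurrence relation symmetric r-OP}, carefully tracking how the index $n$ in $P_{n+r+1}$ splits modulo $r+1$; writing $n = (r+1)m + i$ with $0 \le i \le r$, the three consecutive indices $n$, $n+r$, $n+r+1$ sit in predictable residue classes, and multiplying through by an appropriate power of $x$ and setting $y = x^{r+1}$ should produce, for each fixed residue class $j$, a relation expressing $y\,P_m^{[j]}(y)$ as a $\mathbb{K}$-linear combination of $P_m^{[j]}(y), P_{m-1}^{[j]}(y), \dots, P_{m-r}^{[j]}(y)$ with leading coefficient $1$ — i.e., exactly an equation of the shape \eqref{recurrence relation r+1 fold decomposition components matrix form}. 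The recurrence coefficients that emerge will be explicit polynomial expressions in the $\alpha_n$'s, namely the entries of $\mathrm{L}_{j+1}\cdots\mathrm{L}_r\,\mathrm{U}\,\mathrm{L}_1\cdots\mathrm{L}_j$.

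The second half is the bidiagonal-product identity: one must show that multiplying out $\mathrm{L}_{j+1}\cdots\mathrm{L}_r\,\mathrm{U}\,\mathrm{L}_1\cdots\mathrm{L}_j$, with the $\mathrm{L}_k$ and $\mathrm{U}$ as in \eqref{bidiagonal matrices U and L_k}, yields a $(r+2)$-banded unit-lower-Hessenberg matrix whose nontrivial entries coincide with the recurrence coefficients found above. I would prove this by induction on the number of bidiagonal factors: each left-multiplication by a lower-bidiagonal $\mathrm{L}_k$ adds one subdiagonal and keeps the structure controlled, and each entry of the growing product is a sum of products of $\alpha_n$'s indexed along ``staircase'' paths through the factors — which is precisely the lattice-path bookkeeping underlying the $r$-Stieltjes–Rogers polynomials, so the bridge to the known result \cite[Prop.~8.2]{AlanEtAl-LPandBCF1} and \cite[\S~A.3]{AlanEtAl-LPandBCF3} is natural. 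The indexing of the $\alpha_n$'s in \eqref{bidiagonal matrices U and L_k} — where $\mathrm{L}_k$ carries $\alpha_k, \alpha_{r+1+k}, \alpha_{2(r+1)+k}, \dots$ and $\mathrm{U}$ carries $\alpha_0, \alpha_{r+1}, \alpha_{2(r+1)}, \dots$ — is engineered so that these staircase sums reassemble exactly the coefficients coming from \eqref{recurrence relation symmetric r-OP}; confirming that the two index-shuffles agree is the crux.

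I expect the main obstacle to be precisely this bookkeeping of indices: keeping straight, simultaneously, (i) the residue class $j$ of the starting index, (ii) which of the $r$ lower factors sit to the left of $\mathrm{U}$ versus to its right (the split $\mathrm{L}_{j+1}\cdots\mathrm{L}_r$ on the left, $\mathrm{L}_1\cdots\mathrm{L}_j$ on the right), and (iii) the re-indexing $\alpha_{(r+1)\ell + k}$ within each bidiagonal factor, all while checking the leading coefficient is $1$ and the band width is exactly $r+2$. A clean way to organise this is to first treat the base case $j = 0$ (where $\mathrm{H}^{(r;0)} = \mathrm{L}_1\cdots\mathrm{L}_r\,\mathrm{U}$ is already identified as a production matrix in the cited \cite[Prop.~8.2]{AlanEtAl-LPandBCF1}), and then obtain the general $j$ by a conjugation/shift argument: the decomposition components $\seq{P_n^{[j]}(x)}$ for different $j$ are related to one another by the underlying structure, which at the matrix level corresponds to cyclically moving one $\mathrm{L}_k$ from the right of $\mathrm{U}$ to the left. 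If that shift argument can be made to work, it reduces the whole theorem to the already-known $j = 0$ case plus a short lemma on how cycling a bidiagonal factor past $\mathrm{U}$ affects the recurrence; otherwise one falls back on the direct substitution-and-induction outlined above, which is longer but entirely mechanical.
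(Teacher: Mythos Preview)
Your plan is workable but takes a longer and more laborious route than the paper, and it overlooks the shortcut that makes the argument short.

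The point you underestimate in your first step is this: a \emph{single} substitution of \eqref{r+1 fold decomposition of a symmetric r-OP} into \eqref{recurrence relation symmetric r-OP} does \emph{not} produce a closed recurrence for one component $P^{[j]}$ in terms of itself. Writing $n=(r+1)m+i$ and substituting gives instead the \emph{cross-component} relations
\[
P_{m+1}^{[i]}(y)=P_{m+1}^{[i-1]}(y)-\alpha_{(r+1)m+i}\,P_m^{[i]}(y)\quad(1\le i\le r),
\qquad
P_{m+1}^{[0]}(y)=y\,P_m^{[r]}(y)-\alpha_{(r+1)m}\,P_m^{[0]}(y).
\]
To obtain a recurrence for $P^{[j]}$ alone you must eliminate the other components, i.e.\ compose $r+1$ such relations. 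The paper's observation is that these cross-component relations \emph{are already} the bidiagonal factors:
\[
\big[P_n^{[i-1]}\big]_{n\in\N}=\mathrm{L}_i\,\big[P_n^{[i]}\big]_{n\in\N}\ (1\le i\le r),
\qquad
y\,\big[P_n^{[r]}\big]_{n\in\N}=\mathrm{U}\,\big[P_n^{[0]}\big]_{n\in\N}.
\]
Composing them in the only possible order gives
\(
y\,\big[P_n^{[j]}\big]=\mathrm{L}_{j+1}\cdots\mathrm{L}_r\,\mathrm{U}\,\mathrm{L}_1\cdots\mathrm{L}_j\,\big[P_n^{[j]}\big]
\)
in one line. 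The factorisation \eqref{products of lower bidiagonal matrices with an upper bidiagonal matrix in the middle} is thus \emph{produced} by the proof rather than verified after the fact, and your entire ``second half'' --- multiplying out the bidiagonal product and matching entries by induction, which you correctly anticipate to be the painful bookkeeping step --- becomes unnecessary.

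Your closing ``conjugation/shift'' idea is in fact the paper's mechanism, but applied one level down: the intertwiner $\mathrm{L}_i$ acts between the component sequences $P^{[i]}$ and $P^{[i-1]}$ themselves, not as a post-hoc manipulation of an already-assembled $\mathrm{H}^{(r;0)}$. Had you pursued that remark first rather than last, your two-part plan would have collapsed into the paper's direct argument, with no appeal to the $j=0$ case from \cite{AlanEtAl-LPandBCF1} needed.
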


\begin{proof}
Let $n\in\N$ and $0\leq j\leq r$.
Combining the definition \eqref{r+1 fold decomposition of a symmetric r-OP} of $\seq{P_n^{[j]}(x)}$ with the recurrence relation \eqref{recurrence relation symmetric r-OP} satisfied by $\seq{P_n(x)}$, we find that
\begin{equation}
P_{n+1}^{[j]}(x)
=x^{-\frac{j}{r+1}}\,P_{(r+1)(n+1)+j}\left(x^{\frac{1}{r+1}}\right)
=x^{\frac{1-j}{r+1}}\,P_{(r+1)(n+1)+(j-1)}\left(x^{\frac{1}{r+1}}\right)
-\alpha_{(r+1)n+j}\,x^{-\frac{j}{r+1}}\,P_{(r+1)n+j}\left(x^{\frac{1}{r+1}}\right).
\end{equation}
Using again the definition \eqref{r+1 fold decomposition of a symmetric r-OP} of $\seq{P_n^{[j]}(x)}$, we have
\begin{equation}
x^{-\frac{j}{r+1}}\,P_{(r+1)n+j}\left(x^{\frac{1}{r+1}}\right)=P_n^{[j]}(x)
\end{equation}
and
\begin{equation}
x^{\frac{1-j}{r+1}}\,P_{(r+1)(n+1)+(j-1)}\left(x^{\frac{1}{r+1}}\right)=
\begin{cases}
\hfil x\,P_n^{[r]}(x) & \text{if }j=0, \\
   P_{n+1}^{[j-1]}(x) & \text{if }1\leq j\leq r.
\end{cases}
\end{equation}
Therefore, we find that, for any $n\in\N$,
\begin{equation}
\label{relations between components of sym r-OPS}
P_{n+1}^{[0]}(x)=x\,P_n^{[r]}(x)-\alpha_{(r+1)n}\,P_n^{[0]}(x)
\quad\text{and}\quad
P_{n+1}^{[j]}(x)=P_{n+1}^{[j-1]}(x)-\alpha_{(r+1)n+j}\,P_n^{[j]}(x)
\text{ for }1\leq j\leq r.
\end{equation}
Using the bidiagonal matrices $\mathrm{L}_1,\cdots,\mathrm{L}_r$ and $\mathrm{U}$ in \eqref{bidiagonal matrices U and L_k}, we can rewrite \eqref{relations between components of sym r-OPS} as
\begin{equation}
\label{relation between consecutive components}
x\left[P_n^{[r]}(x)\right]_{n\in\N}=\mathrm{U}\,\left[P_n^{[0]}(x)\right]_{n\in\N}
\quad\text{and}\quad
\left[P_n^{[j-1]}(x)\right]_{n\in\N}=\mathrm{L}_j\,\left[P_n^{[j]}(x)\right]_{n\in\N}
\text{ for }1\leq j\leq r.
\end{equation}
	
Combining both equations in \eqref{relation between consecutive components}, we obtain 
\begin{equation}
\label{relation between non-consecutive components 1}
x\left[P_n^{[r]}(x)\right]_{n\in\N}
=\mathrm{U}\,\mathrm{L}_1\cdots\mathrm{L}_j\left[P_n^{[j]}(x)\right]_{n\in\N}
\quad\text{for any }1\leq j\leq r.
\end{equation}
In particular, we have
\begin{equation}
x\left[P_n^{[r]}(x)\right]_{n\in\N}
=\mathrm{U}\,\mathrm{L}_1\cdots\mathrm{L}_r\left[P_n^{[r]}(x)\right]_{n\in\N}
=\mathrm{H}^{(r;r)}\left[P_n^{[r]}(x)\right]_{n\in\N}.
\end{equation}
Therefore, \eqref{recurrence relation r+1 fold decomposition components matrix form} holds for $j=r$.
	
Now let $0\leq j\leq r-1$.
Using successively the second equation in \eqref{relation between consecutive components}, we find that
\begin{equation}
\label{relation between non-consecutive components 2}
x\left[P_n^{[j]}(x)\right]_{n\in\N}
=\mathrm{L}_{j+1}\cdots\mathrm{L}_r\,x\left[P_n^{[r]}(x)\right]_{n\in\N}.
\end{equation}
Combining the latter with \eqref{relation between non-consecutive components 1}, we find that
\begin{equation}
x\left[P_n^{[j]}(x)\right]_{n\in\N}
=\mathrm{L}_{j+1}\cdots\mathrm{L}_r\,\mathrm{U}\,\mathrm{L}_1\cdots\mathrm{L}_j \left[P_n^{[j]}(x)\right]_{n\in\N}
=\mathrm{H}^{(r;j)}\left[P_n^{[j]}(x)\right]_{n\in\N}.
\end{equation}
Hence, \eqref{recurrence relation r+1 fold decomposition components matrix form} also holds for $0\leq j\leq r-1$.
\end{proof}

As a consequence of Theorem \ref{decomposition of a (r+1)-fold sym r-OPS main th.}, the components $\seq{P_n^{[j]}(x)}$ satisfy $(r+1)$-order recurrence relations whose coefficients corresponding to the nontrivial entries of $\mathrm{H}^{(r;j)}=\mathrm{L}_{j+1}\cdots\mathrm{L}_r\,\mathrm{U}\,\mathrm{L}_1\cdots\mathrm{L}_j$ for all $0\leq j\leq r$.
Therefore, we obtain the following result.
\begin{corollary}
\label{decomposition of a (r+1)-fold sym r-OPS rec. coef.}
For $r\in\Z^+$ and a sequence $\seq[n\in\N]{\alpha_n}$ of nonzero elements in a field $\mathbb{K}$, let $\seq{P_n^{[j]}(x)}$, for $0\leq j\leq r$, be the same $r$-orthogonal polynomial sequences as in Theorem \ref{decomposition of a (r+1)-fold sym r-OPS main th.}. 
Then, 
\begin{equation}
\label{recurrence relation r+1 fold decomposition components}
P_{n+1}^{[j]}(x)=x\,P_n^{[j]}(x)-\sum_{k=0}^{\min(r,n)}\gamma_{n-k}^{\,[k;j]}\,P_{n-k}^{[j]}(x)
\quad\text{for any }n\in\N\text{ and }0\leq j\leq r,
\end{equation}
with initial conditions $P_0^{[j]}(x)=1$ and coefficients
\begin{equation}
\label{recurrence coefficients r+1 fold decomposition components}
\gamma_n^{\,[k;j]}
=\sum_{r\geq t_0>\cdots>t_k\geq 0}\;\prod_{i=0}^{k}\alpha_{(r+1)(n+i)+t_i+j-r}
\quad\text{for any }n\in\N\text{ and }0\leq j,k\leq r,
\quad\text{with }\alpha_m=0\text{ if }m<0.
\end{equation}
\end{corollary}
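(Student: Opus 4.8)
The plan is to prove Corollary~\ref{decomposition of a (r+1)-fold sym r-OPS rec. coef.} by extracting the nontrivial entries of the Hessenberg matrix $\mathrm{H}^{(r;j)}=\mathrm{L}_{j+1}\cdots\mathrm{L}_r\,\mathrm{U}\,\mathrm{L}_1\cdots\mathrm{L}_j$ directly from the bidiagonal factorisation established in Theorem~\ref{decomposition of a (r+1)-fold sym r-OPS main th.}. Indeed, $\mathrm{H}^{(r;j)}$ is an $(r+2)$-banded unit-lower-Hessenberg matrix, so it is the recurrence matrix of some $r$-orthogonal sequence satisfying \eqref{recurrence relation for a r-OPS}; comparing with \eqref{recurrence relation r+1 fold decomposition components} we have $\gamma_n^{[k;j]} = \left(\mathrm{H}^{(r;j)}\right)_{n+k,\,n}$, so everything reduces to computing the $(n+k,n)$-entry of the product of bidiagonal matrices. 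The initial condition $P_0^{[j]}(x)=1$ is immediate from \eqref{r+1 fold decomposition of a symmetric r-OP} with $n=0$, since $P_j(x)=x^j$.

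First I would record the entries of each factor: $\mathrm{L}_k$ has $1$'s on the diagonal and $\alpha_{r+1+km}$... more precisely, from \eqref{bidiagonal matrices U and L_k}, the $(m,m)$-entry of $\mathrm{L}_k$ is $1$ and the $(m+1,m)$-entry is $\alpha_{k+(r+1)m}$ (reading off the pattern $\alpha_k,\alpha_{r+1+k},\alpha_{2(r+1)+k},\dots$), while $\mathrm{U}$ has $(m,m)$-entry $\alpha_{(r+1)m}$ and $(m,m+1)$-entry $1$. Then the $(n+k,n)$-entry of the product $\mathrm{L}_{j+1}\cdots\mathrm{L}_r\,\mathrm{U}\,\mathrm{L}_1\cdots\mathrm{L}_j$ is a sum over all ways of choosing, in each of the $r+1$ factors, either a diagonal step or the unique off-diagonal step, such that the chosen steps compose from row $n+k$ down to column $n$. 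Since each lower-bidiagonal factor contributes a net row-decrease of $0$ or $1$ and $\mathrm{U}$ contributes $0$ or $-1$ (a row-increase of $1$), and the total net decrease must be $k$, exactly $k$ of the $r+1$ factors must be taken in their off-diagonal position while $\mathrm{U}$ — if chosen off-diagonal — would force $k+1$ lower factors off-diagonal; a careful bookkeeping of which subset of $\{\mathrm{L}_{j+1},\dots,\mathrm{L}_r,\mathrm{U},\mathrm{L}_1,\dots,\mathrm{L}_j\}$ is taken off-diagonal, together with the height at which each off-diagonal entry is evaluated, produces the sum \eqref{recurrence coefficients r+1 fold decomposition components}.

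The main obstacle — and the place I would spend the most care — is the reindexing: translating ``the $t$-th factor (in the ordered product) is taken in its off-diagonal position, at matrix-row $m$'' into the subscript of the corresponding $\alpha$. The ordered list of factors is $\mathrm{L}_{j+1},\dots,\mathrm{L}_r,\mathrm{U},\mathrm{L}_1,\dots,\mathrm{L}_j$; assigning to these the ``labels'' $t_0>t_1>\cdots$ running over $\{r,r-1,\dots,1,0\}$ in that order (so $\mathrm{L}_{j+1}\leftrightarrow r$, \dots, $\mathrm{U}\leftrightarrow r-j$, \dots, $\mathrm{L}_j\leftrightarrow 0$, matching the exponent shifts $t_i+j-r$ that appear in \eqref{recurrence coefficients r+1 fold decomposition components}), I must check that when the factors labelled $t_0>\cdots>t_k$ are the ones taken off-diagonal, the off-diagonal entry of the factor labelled $t_i$ is evaluated at a row that makes it equal to $\alpha_{(r+1)(n+i)+t_i+j-r}$. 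The row index increases by one each time we pass through $\mathrm{U}$... here, since $\mathrm{U}$ is never off-diagonal in a term contributing to $\gamma_n^{[k;j]}$ with $k\le r$ unless it is the $(m,m+1)$ entry, I should handle the two cases $t_i\neq r-j$ and the absence of $\mathrm{U}$-off-diagonal terms together; the net effect is that passing through the diagonal of all $r+1$ factors advances the ``block'' index $n$ by one, which is exactly why $n+i$ appears with $i$ running from $0$ to $k$. Once this indexing is pinned down, the formula follows by collecting the product of the chosen off-diagonal entries. Alternatively, and perhaps more transparently, I would derive \eqref{recurrence coefficients r+1 fold decomposition components} combinatorially: by Theorem~\ref{production matrix of generalised m-S.R. poly of type j th.} (stated in the same section) $\mathrm{H}^{(r;j)}$ is the production matrix of partial $r$-Dyck paths, so $\gamma_n^{[k;j]}$ counts, with $\alpha$-weights, the one-step ``descents of depth $k$'' in that walk model, i.e.\ the lattice-path excursions from height $n+k$ to height $n$ using one $r$-fall and $k$ rises interleaved appropriately; the condition $r\ge t_0>\cdots>t_k\ge 0$ records the residues of the intermediate heights, and each factor $\alpha_{(r+1)(n+i)+t_i+j-r}$ is the weight of the $r$-fall landing at the corresponding height. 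Either route works; I would present the direct matrix computation as the primary argument since it needs only Theorem~\ref{decomposition of a (r+1)-fold sym r-OPS main th.}.
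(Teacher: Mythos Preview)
Your approach is correct and coincides with the paper's: the paper derives the corollary from Theorem~\ref{decomposition of a (r+1)-fold sym r-OPS main th.} together with Lemma~\ref{entries of a product of bidiagonal matrices permutated lemma}, which computes the $(n+k,n)$-entry of $\mathrm{H}^{(r;j)}$ by expanding the bidiagonal product and then interpreting each summand as the weight of a partial $r$-Dyck path of length $r+1$ from height $(r+1)(n+k)+j$ to height $(r+1)n+j$ --- exactly the ``combinatorial alternative'' you sketch, used precisely to handle the reindexing you flag as the main obstacle. One slip to correct in your labeling: the direction is reversed. The assignment consistent with $\mathrm{U}\leftrightarrow r-j$ and with the exponent shifts $t_i+j-r$ in \eqref{recurrence coefficients r+1 fold decomposition components} is $\mathrm{L}_{j+1}\leftrightarrow 0,\;\dots,\;\mathrm{L}_r\leftrightarrow r-j-1,\;\mathrm{U}\leftrightarrow r-j,\;\mathrm{L}_1\leftrightarrow r-j+1,\;\dots,\;\mathrm{L}_j\leftrightarrow r$, i.e.\ labels increase from left to right through the product, not decrease (your claims $\mathrm{L}_{j+1}\leftrightarrow r$ and $\mathrm{U}\leftrightarrow r-j$ are mutually inconsistent).
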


Explicit expressions for the coefficients \eqref{recurrence coefficients r+1 fold decomposition components} of the recurrence relation \eqref{recurrence relation r+1 fold decomposition components} were already known for $r=1,2$.
For $r=1$, see \cite[Thm.~9.1]{ChiharaBook}; for $r=2$, see \cite[\S~5.1]{DouakandMaroniClassiquesDeDimensionDeux} and \cite[Lemma~2.1]{AnaWalter3FoldSym}.

To obtain the formula \eqref{recurrence coefficients r+1 fold decomposition components} for the recurrence coefficients in \eqref{recurrence relation r+1 fold decomposition components}, we use the following lemma.
\begin{lemma}
\label{entries of a product of bidiagonal matrices permutated lemma}
For $r\in\Z^+$, $0\leq j\leq r$, and a sequence $\seq[k\in\N]{\alpha_k}$ in a commutative ring $R$, 
let $\mathrm{H}^{(r;j)}=\seq[m,n\in\N]{h^{(r;j)}_{m,n}}$ be defined by \eqref{products of lower bidiagonal matrices with an upper bidiagonal matrix in the middle}-\eqref{bidiagonal matrices U and L_k}. 
Then, for any $m,n\in\N$, $h^{(r;j)}_{m,n}$ is the generating polynomial of the partial $r$-Dyck paths from $(0,(r+1)m+j)$ to $(r+1,(r+1)n+j)$.
Therefore, $\mathrm{H}^{(r;j)}$ is a ${(r+2)}$-banded unit-lower-Hessenberg matrix with nontrivial entries
\begin{equation}
\label{formula for the entries of a product of bidiagonal matrices permutated}
h^{(r;j)}_{n+k,n}
=\sum_{r\geq t_0>\cdots>t_k\geq 0}\;\prod_{i=0}^{k}\alpha_{(r+1)(n+i)+t_i+j-r}
\quad\text{for all }n\in\N\text{ and }0\leq k\leq r,
\quad\text{with }\alpha_m=0\text{ if }m<0.
\end{equation}
\end{lemma}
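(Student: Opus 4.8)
The plan is to expand the matrix product combinatorially, recognise the resulting walks as partial $r$-Dyck paths, and then count those paths. Write the product as $\mathrm{H}^{(r;j)}=B_1\cdots B_{r+1}$, where $B_s=\mathrm{L}_{j+s}$ for $1\le s\le r-j$, $B_{r-j+1}=\mathrm{U}$, and $B_s=\mathrm{L}_{s-(r-j+1)}$ for $r-j+2\le s\le r+1$; each $B_s$ is bidiagonal, and it is convenient to attach to it the \emph{value} $v_s:=(j+s)\bmod(r+1)$, where $v_s=0$ signals $B_s=\mathrm{U}$. Multiplying out,
\[
h^{(r;j)}_{m,n}=\sum (B_1)_{c_0,c_1}(B_2)_{c_1,c_2}\cdots(B_{r+1})_{c_r,c_{r+1}},
\]
the sum being over all $c_0=m,c_1,\dots,c_{r+1}=n$ in $\N$ with every factor a nonzero entry: for $B_s=\mathrm{L}_k$ the options are $c_s=c_{s-1}$ (weight $1$) and $c_s=c_{s-1}-1$ (weight $\alpha_{(r+1)c_s+k}$), and for $B_s=\mathrm{U}$ they are $c_s=c_{s-1}+1$ (weight $1$) and $c_s=c_{s-1}$ (weight $\alpha_{(r+1)c_s}$).

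First I would establish the bijection with partial $r$-Dyck paths. To a walk $(c_t)$ I associate the heights $H_t:=(r+1)c_t+\big((j+t)\bmod(r+1)\big)$, so that $H_0=(r+1)m+j$ and $H_{r+1}=(r+1)n+j$. A short case check shows $H_t-H_{t-1}\in\{+1,-r\}$, with the ``weight-$1$'' option of $B_s$ giving a rise and the ``weight-$\alpha$'' option giving an $r$-fall; here one uses that the residue of $H_t$ modulo $r+1$ increases by $1$ at every step, so the phase at the start of step $s$ is $v_s-1$ when $B_s=\mathrm{L}_{v_s}$ (no wrap) and equals $r$ when $B_s=\mathrm{U}$ (wrap) --- which is precisely why the factorisation has its cyclic shape, and which pins the column moves to the two bidiagonal rules. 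Rises carry weight $1$ on both sides, so it remains to match the weight of an $r$-fall: at a weight-bearing step $s$ one needs $H_s=(r+1)c_s+v_s$, which I would check by splitting into the three regimes $1\le s\le r-j$ (where $v_s=j+s$), $s=r-j+1$ (where $v_s=0$ and the operative residue is $r+1$), and $r-j+2\le s\le r+1$ (where $v_s=j+s-(r+1)$); then $\alpha_{H_s}=\alpha_{(r+1)c_s+v_s}$ is exactly the matrix weight. Conversely, a partial $r$-Dyck path with those endpoints has $H_t\equiv j+t\pmod{r+1}$ and $H_t\ge0$, so $c_t:=\big(H_t-((j+t)\bmod(r+1))\big)/(r+1)\in\N$ returns a walk. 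Hence $h^{(r;j)}_{m,n}$ is the generating polynomial of partial $r$-Dyck paths from $(0,(r+1)m+j)$ to $(r+1,(r+1)n+j)$.

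Next I would extract the closed formula. Writing $m=n+k$ and equating total displacement, every such path uses exactly $r-k$ rises and $k+1$ $r$-falls, so it is determined by the positions $1\le s_1<\cdots<s_{k+1}\le r+1$ of its $r$-falls; counting the $i-1$ falls and $s_i-i$ rises that precede the $i$-th fall gives its landing height as $H_{s_i}=(r+1)(n+k-i)+j+s_i$, so that fall carries weight $\alpha_{(r+1)(n+k-i)+j+s_i}$. A choice of fall-positions yields a genuine path in $\N\times\N$ exactly when all these indices are nonnegative (the minimum height along the path is attained right after some $r$-fall), so under the stated convention $\alpha_m=0$ for $m<0$ one may sum over \emph{all} $s_1<\cdots<s_{k+1}$; the substitution $t_i:=s_{k+1-i}-1$, which sends $1\le s_1<\cdots<s_{k+1}\le r+1$ to $r\ge t_0>\cdots>t_k\ge0$, then turns $\sum_{s_1<\cdots<s_{k+1}}\prod_{i=1}^{k+1}\alpha_{(r+1)(n+k-i)+j+s_i}$ into the right-hand side of \eqref{formula for the entries of a product of bidiagonal matrices permutated}. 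The band structure is immediate: the sum is empty unless $0\le k\le r$, and it equals $1$ when $n=m+1$.

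The step I expect to be the main obstacle is the phase bookkeeping in the second paragraph --- verifying that the single relation $H_s=(r+1)c_s+v_s$ holds at every weight-bearing step across the three regimes (the ``left'' block $\mathrm{L}_{j+1},\dots,\mathrm{L}_r$, the middle factor $\mathrm{U}$, and the ``right'' block $\mathrm{L}_1,\dots,\mathrm{L}_j$), since this is where the cyclic reshuffling built into the factorisation \eqref{products of lower bidiagonal matrices with an upper bidiagonal matrix in the middle} must be reconciled with the linear labelling of heights. Once that identity is secured, both the lattice-path description and the closed formula follow with only routine bookkeeping.
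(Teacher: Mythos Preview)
Your proposal is correct and follows essentially the same approach as the paper's own proof: expand the product, identify each nonzero summand with the weight of a partial $r$-Dyck path of length $r+1$ between the stated endpoints via an explicit height map, and then read off the closed formula by locating the $r$-falls. The only cosmetic differences are that the paper writes the path vertices piecewise (splitting at the position of $\mathrm{U}$) rather than using your uniform formula $H_t=(r+1)c_t+\big((j+t)\bmod(r+1)\big)$, and that it indexes the $r$-falls right-to-left as $r\ge t_0>\cdots>t_k\ge0$ from the outset instead of going left-to-right and then substituting $t_i=s_{k+1-i}-1$; both lead to the same computation.
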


When $j=0$, Lemma \ref{entries of a product of bidiagonal matrices permutated lemma} reduces to \cite[Prop.~3.5]{HypergeometricMOP+BCF}.
Here, we give a different proof, which we believe to be more insightful, because we give a combinatorial interpretation to the entries of $\mathrm{H}^{(r;j)}$ and use it to derive \eqref{formula for the entries of a product of bidiagonal matrices permutated}, instead of proving \eqref{formula for the entries of a product of bidiagonal matrices permutated} by induction.

\begin{proof}
By definition of $\mathrm{H}^{(r;j)}=\mathrm{L}_{j+1}\cdots\mathrm{L}_r\,\mathrm{U}\,\mathrm{L}_1\cdots\mathrm{L}_j$,
\begin{equation}
\label{product of bidiagonal matrices proof formula 1*}
h^{(r;j)}_{m,n}
=\sum_{\left(m_1,\cdots,m_r\right)\in\N^r}
\prod_{i=0}^{r-j-1}\ell^{(j+i+1)}_{m_i,m_{i+1}}
\,u_{m_{r-j},m_{r-j+1}}\,
\prod_{i=r-j+1}^{r}\ell^{(i+j-r)}_{m_i,m_{i+1}}
\quad\text{with }m_0=m\text{ and }m_{r+1}=n,
\end{equation}
where $\mathrm{U}=\seq[i,i'\in\N]{u_{i,i'}}$ and 
$\mathrm{L}_k=\seq[i,i'\in\N]{\ell^{(k)}_{i,i'}}$ for $1\leq k\leq r$.

Due to the bidiagonality of the matrices $\mathrm{L}_k$ and $\mathrm{U}$, the summands above are equal to $0$ unless
\begin{equation}
\label{product of bidiagonal matrices proof formula 2*}
m_{r-j+1}-m_{r-j}\in\{0,1\}
\quad\text{and}\quad
m_{i+1}-m_i\in\{-1,0\}
\text{ for all }i\in\{1,\cdots,r\}\backslash\{r-j\}.
\end{equation}
In particular, this implies that $h^{(r;j)}_{m,n}=0$ when $m-n\not\in\{-1,0,\cdots,r\}$.

By definition of the matrices $\mathrm{L}_1,\cdots,\mathrm{L}_r$,
\begin{equation}
\ell^{(k)}_{i,i'}=
\begin{cases}
\alpha_{(r+1)i'+k} & \text{if }i'=i-1, \\
		   \hfil 1 & \text{if }i'=i, \\
		   \hfil 0 & \text{otherwise},
\end{cases}
\quad\text{for any }1\leq k\leq r\text{ and }i,i'\in\N.
\end{equation}
Hence, $\ell^{(k)}_{i,i'}$ is the weight of the step from $\big(n,(r+1)i+k-1\big)$ to $\big(n+1,(r+1)i'+k\big)$ for any $n\in\N$ if that step is allowed in a partial $r$-Dyck path and $\ell^{(k)}_{i,i'}=0$ if that step is not allowed.

Similarly,
\begin{equation}
u_{i,i'}=
\begin{cases}
\alpha_{(r+1)i'} & \text{if }i'=i, \\
		 \hfil 1 & \text{if }i'=i+1, \\
		 \hfil 0 & \text{otherwise},
\end{cases}
\end{equation}
so $u_{i,i'}$ is the weight of the step from $\big(n,(r+1)i+r\big)$ to $\big(n+1,(r+1)i'\big)$, for any $n\in\N$, if that step is allowed in a partial $r$-Dyck path and $u_{i,i'}=0$ if that step is not allowed.

Therefore, the summand in \eqref{product of bidiagonal matrices proof formula 1*} corresponding to each $\left(m_1,\cdots,m_r\right)\in\N^r$ is equal to the weight of the partial $r$-Dyck path from $(0,(r+1)m+j)$ to $(r+1,(r+1)n+j)$, with steps $v_0\to v_1\to\cdots\to v_{r+1}$ such that
\begin{equation}
v_i=
\begin{cases}
	\hfil\left(i,(r+1)m_i+j+i\right) & \text{if }1\leq i\leq r-j, \\
\hfil\left(i,(r+1)(m_i-1)+j+i\right) & \text{if }r-j+1\leq i\leq r+1,
\end{cases}
\quad\text{with }m_0=m\text{ and }m_{r+1}=n,
\end{equation}
if this partial $r$-Dyck path exists, and it is equal to $0$ if this partial $r$-Dyck path does not exist.
Moreover, all partial $r$-Dyck paths from $(0,(r+1)m+j)$ to $(r+1,(r+1)n+j)$ are of this form, with $\left(m_1,\cdots,m_r\right)$ satisfying \eqref{product of bidiagonal matrices proof formula 2*}, because the only steps allowed are $(1,1)$ and $(1,-r)$.
As a result, we find that $h^{(r;j)}_{m,n}$ is the generating polynomial of the partial $r$-Dyck paths from $(0,(r+1)m+j)$ to $(r+1,(r+1)n+j)$.

Any partial $r$-Dyck path from $(0,(r+1)(n+k)+j)$ to $(r+1,(r+1)n+j)$ has $r+1$ steps: $k+1$ $r$-falls and $r-k$ rises.
In particular, there are no $r$-Dyck paths from $(0,(r+1)(n+k)+j)$ to $(r+1,(r+1)n+j)$ unless $-1\leq k\leq r$ and there exists only one partial $r$-Dyck path from $(0,(r+1)n+j)$ to $(r+1,(r+1)(n+1)+j)$, formed by $r+1$ rises, which all have weight $1$.
Hence, $h^{(r;j)}_{n+k,n}=0$ whenever $k<-1$ or $k>r$ and $h^{(r;j)}_{n,n+1}=1$ for any $n\in\N$. 
Therefore, $\mathrm{H}^{(r;j)}$ is a ${(r+2)}$-banded unit-lower-Hessenberg matrix.

Now we fix $0\leq k\leq r$ and compute $h^{(r;j)}_{n+k,n}$. 
Let $\mathcal{P}$ be a partial $r$-Dyck path from $(0,(r+1)(n+k)+j)$ to $(r+1,(r+1)n+j)$.
The weight of $\mathcal{P}$ is equal to the product of its $k+1$ $r$-falls.
Let 
\begin{equation}
V_{\mathcal{P}}=\left\{\left.v_i=(i,y(i))\in\N^2\right|0\leq i\leq r+1\right\}
\end{equation}
be the set of vertices of $\mathcal{P}$ and let $r\geq t_0>\cdots>t_k\geq 0$ such that the $r$-falls correspond to the steps $v_{t_i}\to v_{t_i+1}$, ordered from the right to the left.
The weight of each $r$-fall $v_{t_i}\to v_{t_i+1}$ is $\alpha_{y\left(t_i+1\right)}$.
Observe that $\mathcal{P}$ has $r-t_i$ steps to the right of the $r$-fall $v_{t_i}\to v_{t_i+1}$, and $i$ of those steps are $r$-falls. 
Hence, because the final height of $\mathcal{P}$ is $(r+1)n+j$, we have
\begin{equation}
(r+1)n+j=y\left(t_i+1\right)-ri+r-t_i-i
\Leftrightarrow
y\left(t_i+1\right)=(r+1)(n+1)+j-r+t_i.
\end{equation}
Therefore, the weight of $\mathcal{P}$ is $\dis\prod_{i=0}^{k}\alpha_{(r+1)(n+i)+t_i+j-r}$.
In addition, there is a bijection between the possible choices of $r\geq t_0>\cdots>t_k\geq 0$ and the partial $r$-Dyck paths from $(0,(r+1)(n+k)+j)$ to $(r+1,(r+1)n+j)$, because each of those paths is uniquely determined by its $r$-falls.
As a result, we obtain \eqref{formula for the entries of a product of bidiagonal matrices permutated}.
\end{proof}

\subsection{Production matrices of generating polynomials of lattice paths}
\label{Results on lattice paths and production matrices}
In this section, we show that the production matrices of the generalised $r$-Stieltjes-Rogers polynomials of type $j$, with $0\leq j\leq r$, are the banded Hessenberg matrices given by \eqref{products of lower bidiagonal matrices with an upper bidiagonal matrix in the middle}-\eqref{bidiagonal matrices U and L_k} and then we use that result to prove the total positivity of the matrices of generalised $r$-Stieltjes-Rogers polynomials of type $j$ as a consequence of the total positivity of their production matrices.
The results presented in this section, Theorem \ref{production matrix of generalised m-S.R. poly of type j th.} and 
Corollaries \ref{total positivity of the Hessenberg matrices} 
and \ref{total positivity of the generalised r-S.-R. poly of type j},
were independently obtained in \cite[\S~A.3]{AlanEtAl-LPandBCF3}, where they appeared slightly after the first version of this text was made available.

\begin{theorem}
\label{production matrix of generalised m-S.R. poly of type j th.}
For $r\in\Z^+$, $0\leq j\leq r$, and a sequence $\seq[n\in\N]{\alpha_n}$ in a commutative ring $R$, 
the production matrix of the matrix $\mathrm{S}^{(r;j)}=\seq[n,k\in\N]{\generalisedStieltjesRogersPolyTypeJ[r]{n}{k}{j}{\boldsymbol{\alpha}}}$ of generalised $r$-Stieltjes-Rogers polynomials of type $j$ is
\begin{equation}
\label{products of lower bidiagonal matrices with an upper bidiagonal matrix in the middle*}
\mathrm{H}^{(r;j)}=\mathrm{L}_{j+1}\cdots\mathrm{L}_r\,\mathrm{U}\,\mathrm{L}_1\cdots\mathrm{L}_j,
\end{equation} 
where $\mathrm{L}_1,\cdots,\mathrm{L}_r$ and $\mathrm{U}$ are the infinite bidiagonal matrices in \eqref{bidiagonal matrices U and L_k}.
%
\end{theorem}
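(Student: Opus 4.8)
The plan is to prove the identity entry-by-entry: I will show that $\big((\mathrm{H}^{(r;j)})^{n}\big)_{0,k}=\generalisedStieltjesRogersPolyTypeJ[r]{n}{k}{j}{\boldsymbol{\alpha}}$ for all $n,k\in\N$, which is exactly the assertion that $\mathrm{H}^{(r;j)}$ is the production matrix of $\mathrm{S}^{(r;j)}$. The two ingredients are the walk description \eqref{output matrix entries formula} of production-matrix powers and the combinatorial description of the entries of $\mathrm{H}^{(r;j)}$ furnished by Lemma~\ref{entries of a product of bidiagonal matrices permutated lemma}. (Since $\mathrm{H}^{(r;j)}$ is $(r+2)$-banded, all its powers are well-defined, so this is legitimate.)

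First I would unwind \eqref{output matrix entries formula}: $\big((\mathrm{H}^{(r;j)})^{n}\big)_{0,k}=\sum h^{(r;j)}_{0,i_1}h^{(r;j)}_{i_1,i_2}\cdots h^{(r;j)}_{i_{n-1},k}$, summed over all $n$-step walks $0=i_0\to i_1\to\cdots\to i_n=k$ in $\N$. By Lemma~\ref{entries of a product of bidiagonal matrices permutated lemma}, the factor $h^{(r;j)}_{i_{\ell-1},i_\ell}$ is the generating polynomial of partial $r$-Dyck paths from $\big(0,(r+1)i_{\ell-1}+j\big)$ to $\big(r+1,(r+1)i_\ell+j\big)$; translating the $\ell$-th block to start at abscissa $(r+1)(\ell-1)$, concatenating the $n$ blocks, and multiplying weights, each summand becomes the generating polynomial of partial $r$-Dyck paths from $(0,j)$ to $\big((r+1)n,(r+1)k+j\big)$ that pass through height $(r+1)i_\ell+j$ at every abscissa $(r+1)\ell$. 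The elementary point here is that a partial $r$-Dyck path issuing from $(0,j)$ automatically has height congruent to $j$ modulo $r+1$ at every abscissa that is a multiple of $r+1$: after $a$ rises and $b$ $r$-falls the height is $j+a-rb$ and the abscissa is $a+b$, and $(a+b)-\big((j+a-rb)-j\big)=(r+1)b$; since $0\le j\le r$, such a height is uniquely of the form $(r+1)i+j$ with $i\in\N$. Hence summing over all intermediate $i_1,\dots,i_{n-1}$ recovers \emph{every} partial $r$-Dyck path from $(0,j)$ to $\big((r+1)n,(r+1)k+j\big)$ exactly once, so $\big((\mathrm{H}^{(r;j)})^{n}\big)_{0,k}$ is precisely the generating polynomial of that set of paths.

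It then remains to identify that generating polynomial with $\generalisedStieltjesRogersPolyTypeJ[r]{n}{k}{j}{\boldsymbol{\alpha}}$, the generating polynomial of partial $r$-Dyck paths from $(0,0)$ to $\big((r+1)n+j,(r+1)k+j\big)$. I would do this with a weight-preserving bijection: because $j\le r$, an $r$-fall is impossible from any height strictly below $r$, so a partial $r$-Dyck path starting at $(0,0)$ is forced to begin with $j$ consecutive rises and reach $(j,j)$; deleting these $j$ initial rises and translating by $j$ in abscissa yields a partial $r$-Dyck path from $(0,j)$ to $\big((r+1)n,(r+1)k+j\big)$, and prepending $j$ rises is the inverse map. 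Since rises carry weight $1$ and the $r$-falls — the only steps with nontrivial weight — are left untouched, this bijection preserves weights, so the two generating polynomials are equal, giving $\big((\mathrm{H}^{(r;j)})^{n}\big)_{0,k}=\generalisedStieltjesRogersPolyTypeJ[r]{n}{k}{j}{\boldsymbol{\alpha}}$ and hence the theorem.

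The main obstacle is the apparent mismatch between the two path families — $(r+1)n$ steps starting at height $j$ versus $(r+1)n+j$ steps starting at height $0$ — and the crux is the observation that the hypothesis $0\le j\le r$ forces exactly $j$ initial rises, which disposes of it cleanly; the remaining points (the congruence giving the canonical block decomposition, and the degenerate case $n=0$, where both sides equal $\delta_{k,0}$) are routine bookkeeping. An alternative I would mention only in passing is that $\mathrm{H}^{(r;j)}=(\mathrm{L}_1\cdots\mathrm{L}_j)^{-1}\,\mathrm{H}^{(r;0)}\,(\mathrm{L}_1\cdots\mathrm{L}_j)$, so one could combine the known case $j=0$ with the fact that the $0$-th row of the unipotent lower-triangular matrix $(\mathrm{L}_1\cdots\mathrm{L}_j)^{-1}$ is $(1,0,0,\dots)$; but this reduces the claim to essentially the same combinatorial identity, so I would favour the direct argument above.
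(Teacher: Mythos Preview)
Your proposal is correct and follows essentially the same route as the paper: both arguments expand $\big((\mathrm{H}^{(r;j)})^n\big)_{0,k}$ via \eqref{output matrix entries formula}, invoke Lemma~\ref{entries of a product of bidiagonal matrices permutated lemma} to interpret each factor $h^{(r;j)}_{i_{\ell-1},i_\ell}$ as a generating polynomial of short partial $r$-Dyck paths, concatenate these blocks into full partial $r$-Dyck paths, and then use the observation that the first $j$ steps of any partial $r$-Dyck path from $(0,0)$ are forced rises to identify the result with $\generalisedStieltjesRogersPolyTypeJ[r]{n}{k}{j}{\boldsymbol{\alpha}}$. Your write-up is in fact slightly more explicit than the paper's on the point that every partial $r$-Dyck path from height $j$ has height congruent to $j$ modulo $r+1$ at each abscissa divisible by $r+1$ (ensuring the block decomposition is exhaustive), which the paper leaves implicit.
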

For $j=0$, the decomposition \eqref{products of lower bidiagonal matrices with an upper bidiagonal matrix in the middle*} was found in \cite[Prop.~8.2]{AlanEtAl-LPandBCF1}.

\begin{proof}
Let $\mathrm{A}^{(r;j)}=\left(a_{n,k}^{(r;j)}\right)_{n,k\in\N}$ be the output matrix of $\mathrm{H}^{(r;j)}$.
We want to prove that 
\begin{equation}
\label{entries of the output matrix A as gen. r-S.R. poly}
a_{n,k}^{(r;j)}=\generalisedStieltjesRogersPolyTypeJ[r]{n}{k}{j}{\boldsymbol{\alpha}}
\quad\text{for all }n,k\in\N\text{ and }0\leq j\leq r.
\end{equation}

Recalling \eqref{output matrix entries formula}, $a_{n,k}^{(r;j)}$ is the sum of the weights of all $n$-step walks in $\N$ from $i_0=0$ to $i_n=k$, where the weight of a walk is the product of the weights of its steps and a step from $i$ to $j$ has weight $h_{i,j}^{(r;j)}$.

Replacing $i_m\in\N$ by $\big((r+1)m+j,(r+1)i_m+j\big)\in\N^2$ for all $0\leq m\leq n$, the latter is equivalent to saying that $a_{n,k}^{(r;j)}$ is the generating polynomial of the paths in $\N^2$ with vertex set
\begin{equation}
V=\left\{\left.v_m=\big((r+1)m+j,(r+1)i_m+j\big)\in\N^2\,\right|\,0\leq m\leq n\right\}
\quad\text{with }i_0=0\text{ and }i_n=k,
\end{equation}
and steps $v_m\to v_{m+1}$ with weights $h_{i_m,i_{m+1}}^{(r;j)}$, for $0\leq m\leq n-1$. 

Using Lemma \ref{entries of a product of bidiagonal matrices permutated lemma}, $h_{i_m,i_{m+1}}^{(r;j)}$ is the generating polynomial of the partial $r$-Dyck paths from $(0,(r+1)i_m+j)$ to $(r+1,(r+1)i_{m+1}+j)$.
Moreover, the weight of a partial $r$-Dyck path is invariable with horizontal shifts, because it only depends on its height after each $r$-fall.
So, $h_{i_m,i_{m+1}}^{(r;j)}$ is also the generating polynomial of the partial $r$-Dyck paths from $v_m$ to $v_{m+1}$.
Therefore, $a_{n,k}^{(r;j)}$ is the generating polynomial of the partial $r$-Dyck paths from $v_0=(j,j)$ to $v_n=\big((r+1)n+j,(r+1)k+j\big)$.

In addition, the first $j$ steps (in fact, the first $r$ steps) of any partial $r$-Dyck path starting at $(0,0)$ are all rises, so any partial $r$-Dyck path starting at $(0,0)$ passes at $(j,j)$ and has the same weight as the path starting at $(j,j)$ obtained by removing its first $j$ steps, which are all rises, so have weight $1$.
As a result, $a_{n,k}^{(r;j)}$ is the generating polynomial of the partial $r$-Dyck paths from $(0,0)$ to $\big((r+1)n+j,(r+1)k+j\big)$, which means that \eqref{entries of the output matrix A as gen. r-S.R. poly} holds,
concluding our proof.
\end{proof}

Following the terminology used in \cite{AlanEtAl-LPandBCF1,AlanEtAl-LPandBCF2,AlanEtAl-LPandBCF3,PinkusTotallyPositiveMatrices}, we say that a matrix with real entries is \textit{totally positive} if all its minors are nonnegative (different terminology is used on some other references on total positivity, e.g. \cite{GantmacherKreinOscillationMatrices,FallatJohnsonTotallyNonnegativeMatrices}).
The definition of a totally positive matrix can be naturally extended for matrices with entries in any partially ordered commutative ring (see \cite[\S 9.1]{AlanEtAl-LPandBCF1} for more details).
A bidiagonal matrix is totally positive if and only if all its entries are nonnegative, because all its nonzero minors are products of entries (see \cite[Lemma~9.1]{AlanEtAl-LPandBCF1}).
Moreover, using the Cauchy-Binet formula (see \cite[Prop.~1.4]{PinkusTotallyPositiveMatrices}), the product of matrices preserves total positivity.
Therefore, we have the following result as a direct consequence of Theorem \ref{production matrix of generalised m-S.R. poly of type j th.}.
\begin{corollary}
\label{total positivity of the Hessenberg matrices}
For $r\in\Z^+$ and $0\leq j\leq r$, the matrix $\mathrm{H}^{(r;j)}$ defined by \eqref{products of lower bidiagonal matrices with an upper bidiagonal matrix in the middle*} is totally positive in the polynomial ring $\Z[\alpha]$ equipped with the coefficient-wise partial order, i.e., all minors of $\mathrm{H}^{(r;j)}$ are elements of $\Z[\alpha]$ whose coefficients are all nonnegative.
\end{corollary}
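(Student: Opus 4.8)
The plan is to obtain total positivity directly from the bidiagonal factorisation \eqref{products of lower bidiagonal matrices with an upper bidiagonal matrix in the middle*}, as anticipated in the paragraph preceding the statement. First I would observe that every entry of each factor $\mathrm{L}_1,\dots,\mathrm{L}_r,\mathrm{U}$ displayed in \eqref{bidiagonal matrices U and L_k} equals $0$, $1$, or one of the indeterminates $\alpha_n$, hence is an element of $\Z[\alpha]$ that is nonnegative for the coefficient-wise partial order. Since every nonzero minor of a bidiagonal matrix is a product of entries (\cite[Lemma~9.1]{AlanEtAl-LPandBCF1}), each factor $\mathrm{L}_k$ and $\mathrm{U}$ is therefore totally positive over $\Z[\alpha]$.

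Next I would use closure of total positivity under products: by the Cauchy--Binet formula (\cite[Prop.~1.4]{PinkusTotallyPositiveMatrices}) any minor of a product $AB$ expands as a sum of products of a minor of $A$ with a minor of $B$, so over a partially ordered commutative ring the product of totally positive matrices is totally positive. Applying this repeatedly along the $r$-fold product $\mathrm{H}^{(r;j)}=\mathrm{L}_{j+1}\cdots\mathrm{L}_r\,\mathrm{U}\,\mathrm{L}_1\cdots\mathrm{L}_j$ then shows that $\mathrm{H}^{(r;j)}$ is totally positive over $\Z[\alpha]$, which is precisely the assertion that every minor of $\mathrm{H}^{(r;j)}$ is a polynomial in the $\alpha_n$ with nonnegative integer coefficients.

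The only delicate point, and the main obstacle worth flagging, is that all the matrices here are infinite, so one must check that the products in \eqref{products of lower bidiagonal matrices with an upper bidiagonal matrix in the middle*} and the Cauchy--Binet expansions of their minors genuinely make sense; this is where the banded structure is used. Each $\mathrm{L}_k$ and $\mathrm{U}$ is banded, hence row- and column-finite, so every finite product appearing in \eqref{products of lower bidiagonal matrices with an upper bidiagonal matrix in the middle*} is a well-defined banded matrix, any fixed minor of it is a finite sum of products of finitely many entries, and the Cauchy--Binet sums are likewise finite. Alternatively, one could extract the weaker statement that the entries of $\mathrm{H}^{(r;j)}$ lie in $\Z[\alpha]$ with nonnegative coefficients from Lemma~\ref{entries of a product of bidiagonal matrices permutated lemma}, since by that lemma they are generating polynomials of partial $r$-Dyck paths; but that observation only controls the entries, not all minors, so the factorisation argument is what yields the full total-positivity conclusion.
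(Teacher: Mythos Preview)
Your proof is correct and follows essentially the same route as the paper: total positivity of the bidiagonal factors via \cite[Lemma~9.1]{AlanEtAl-LPandBCF1}, then closure under products via the Cauchy--Binet formula \cite[Prop.~1.4]{PinkusTotallyPositiveMatrices}, applied to the factorisation \eqref{products of lower bidiagonal matrices with an upper bidiagonal matrix in the middle*}. Your extra paragraph checking that the infinite products and Cauchy--Binet sums are finite thanks to bandedness is a welcome elaboration the paper leaves implicit; the only trivial slip is calling it an ``$r$-fold product'' when there are $r+1$ factors.
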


Furthermore, the output matrix of a totally positive matrix is also totally positive (see \cite[Thm.~9.4]{AlanEtAl-LPandBCF1}).
As a result, Theorem \ref{production matrix of generalised m-S.R. poly of type j th.} implies that the matrix of generalised $r$-Stieltjes-Rogers polynomials of type $j$, $\mathrm{S}^{(r;j)}$, is coefficient-wise totally positive for any $r\in\Z^+$ and $0\leq j\leq r$.
Moreover, we can reduce the generalised $r$-Stieltjes-Rogers polynomials of type higher than $j$ to the ones of type $0\leq j\leq r$ via the relation 
\begin{equation}
\label{relation between generalised m-S.-R. poly of type j for any j in N and j<=m}
\generalisedStieltjesRogersPolyTypeJ[r]{n}{k}{(r+1)q+j}{\mathbf{\alpha}}
=\generalisedStieltjesRogersPolyTypeJ[r]{n+q}{k+q}{j}{\mathbf{\alpha}} 
\quad\text{for any }r\in\Z^+,\,n,k,q\in\N,\text{ and }0\leq j\leq r,
\end{equation}
which is a direct consequence of the definition of the generalised $r$-Stieltjes-Rogers polynomials.
As a consequence, $\mathrm{S}^{(r;(r+1)q+j)}$ is a submatrix of $\mathrm{S}^{(r;j)}$ obtained by removing its first $q$ rows and columns.
Therefore, we obtain the following result.
\begin{corollary}
\label{total positivity of the generalised r-S.-R. poly of type j}
For any $r\in\Z^+$ and $j\in\N$, the matrix 
$\mathrm{S}^{(r;j)}=\seq[n,k\in\N]{\generalisedStieltjesRogersPolyTypeJ[r]{n}{k}{j}{\boldsymbol{\alpha}}}$, is totally positive in the polynomial ring $\Z[\alpha]$ equipped with the coefficient-wise partial order, i.e., all minors of $\mathrm{S}^{(r;j)}$ are elements of $\Z[\alpha]$ whose coefficients are all nonnegative.
\end{corollary}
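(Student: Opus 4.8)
The plan is to deduce the statement from Theorem~\ref{production matrix of generalised m-S.R. poly of type j th.} together with the two stability properties of total positivity recalled just above — stability under matrix products, and stability under passing from a production matrix to its output matrix — and then to bootstrap from the range $0\le j\le r$ to an arbitrary $j\in\N$ by means of the reduction \eqref{relation between generalised m-S.-R. poly of type j for any j in N and j<=m}.

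First I would settle the case $0\le j\le r$. Here Corollary~\ref{total positivity of the Hessenberg matrices} already gives that the Hessenberg matrix $\mathrm{H}^{(r;j)}=\mathrm{L}_{j+1}\cdots\mathrm{L}_r\,\mathrm{U}\,\mathrm{L}_1\cdots\mathrm{L}_j$ is totally positive in $\Z[\alpha]$ with the coefficient-wise partial order, being a product of bidiagonal matrices whose entries ($1$ and the $\alpha_m$) are coefficient-wise nonnegative. Since $\mathrm{H}^{(r;j)}$ is $(r+2)$-banded, each of its rows has only finitely many nonzero entries, so all of its powers are well-defined and its output matrix exists; by Theorem~\ref{production matrix of generalised m-S.R. poly of type j th.} that output matrix is precisely $\mathrm{S}^{(r;j)}$. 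Applying the stability of total positivity under the output-matrix operation (\cite[Thm.~9.4]{AlanEtAl-LPandBCF1}, in the generalised form valid over a partially ordered commutative ring) then yields that $\mathrm{S}^{(r;j)}$ is totally positive in $\Z[\alpha]$ for every $0\le j\le r$.

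Next I would handle a general $j\in\N$ by Euclidean division: write $j=(r+1)q+j'$ with $q\in\N$ and $0\le j'\le r$. By \eqref{relation between generalised m-S.-R. poly of type j for any j in N and j<=m}, the matrix $\mathrm{S}^{(r;j)}$ is the submatrix of $\mathrm{S}^{(r;j')}$ obtained by deleting its first $q$ rows and first $q$ columns, so any minor of $\mathrm{S}^{(r;j)}$ with row set $\{n_1<\cdots<n_p\}$ and column set $\{k_1<\cdots<k_p\}$ equals the minor of $\mathrm{S}^{(r;j')}$ with row set $\{n_1+q<\cdots<n_p+q\}$ and column set $\{k_1+q<\cdots<k_p+q\}$, which is a coefficient-wise nonnegative element of $\Z[\alpha]$ by the previous paragraph. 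Hence all minors of $\mathrm{S}^{(r;j)}$ are coefficient-wise nonnegative, which is the assertion.

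The argument is a short bookkeeping chain and I do not expect a genuinely hard step; the only point that needs care is that ``totally positive'' is meant throughout in the generalised sense of \cite[\S~9]{AlanEtAl-LPandBCF1}, i.e. over the partially ordered polynomial ring $\Z[\alpha]$, so one must make sure that the two external inputs — the Cauchy–Binet argument behind product-stability (already invoked for Corollary~\ref{total positivity of the Hessenberg matrices}) and the output-matrix stability theorem — are applied in that generalised form rather than only for matrices with real entries, and that the reduction to $0\le j'\le r$ is the right one because those are exactly the types directly covered by Theorem~\ref{production matrix of generalised m-S.R. poly of type j th.}.
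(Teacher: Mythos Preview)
Your proposal is correct and follows essentially the same approach as the paper: first obtain total positivity of $\mathrm{S}^{(r;j)}$ for $0\le j\le r$ from Corollary~\ref{total positivity of the Hessenberg matrices} and Theorem~\ref{production matrix of generalised m-S.R. poly of type j th.} via \cite[Thm.~9.4]{AlanEtAl-LPandBCF1}, then extend to arbitrary $j\in\N$ using \eqref{relation between generalised m-S.-R. poly of type j for any j in N and j<=m} to exhibit $\mathrm{S}^{(r;j)}$ as a submatrix of $\mathrm{S}^{(r;j')}$ with $0\le j'\le r$. The extra care you take in noting that the output-matrix stability must be applied over the partially ordered ring $\Z[\alpha]$ is appropriate but not a point of divergence.
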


\section{On $(r+1)$-fold symmetric $r$-orthogonal polynomials}
\label{Symmetric MOP}
We start this section by using Theorem \ref{decomposition of a (r+1)-fold sym r-OPS main th.} and the spectral properties of oscillation matrices to obtain new, simpler proofs of known properties of the zeros of $(r+1)$-fold symmetric $r$-orthogonal polynomial sequences with positive recurrence coefficients, which we collect in Proposition \ref{zero location (r+1)-fold sym r-OPS}.

Next, we use Theorem \ref{production matrix of generalised m-S.R. poly of type j th.} to find combinatorial interpretations for the moments of the dual sequences of $(r+1)$-fold symmetric $r$-orthogonal polynomial sequences and of the components of their decomposition, as described in Proposition \ref{dual sequences of a (r+1)-fold sym r-OPS and its components}.

These combinatorial interpretations are used to find orthogonality measures on the $(r+1)$-star for $(r+1)$-fold symmetric $r$-orthogonal polynomial sequences and positive orthogonality measures on the positive real line for the components of their decomposition (see Theorems \ref{multiple orthogonality measures on the (r+1)-star th.*} and \ref{orthogonality functionals and measures for the components}, respectively). 
The existence of these orthogonality measures is well known since it was already found in \cite{AptKalVanIseGeneticSums}, but we present here a simpler proof of their existence and give a novel combinatorial interpretation for their nonzero moments.

Finally, we present, in Theorem \ref{particular case th.}, an explicit example of $(r+1)$-fold symmetric $r$-orthogonal polynomials, which we show that are Appell sequences, can be represented as terminating hypergeometric series, and are multiple orthogonal with respect to measures that can be expressed via Meijer G-functions.

\subsection{Location and interlacing properties of the zeros}
\label{Zeros section}
\begin{proposition}(cf. \cite[Thm.~2.2]{BenRomdhaneZeros-of-d-Symmetric-d-OP})
\label{zero location (r+1)-fold sym r-OPS}
For $r\in\Z^+$ and a sequence of positive real numbers $\seq[n\in\N]{\alpha_n}$, 
let $\seq{P_n(x)}$ be the $(r+1)$-fold symmetric $r$-orthogonal polynomial sequence satisfying the recurrence relation \eqref{recurrence relation symmetric r-OP} 
and let $\seq{P_n^{[j]}(x)}$, with $0\leq j\leq r$, be the components of the decomposition \eqref{r+1 fold decomposition of a symmetric r-OP} of $\seq{P_n(x)}$. 
Then, the following statements hold for any $n\in\Z^+$ and $0\leq j\leq r$:
\begin{enumerate}[label=(\alph*)]
\item 
$P_n^{[j]}$ has $n$ simple, real, and positive zeros, which we denote, in increasing order, by $x_1^{[n;j]},\cdots,x_n^{[n;j]}$.

\item
The zeros of $P_n^{[j]}$ and $P_{n+1}^{[j]}$ interlace, i.e.:
\begin{equation}
\label{interlacing of the zeros of the components of the r+1-fold decomposition}
x_1^{[n+1;j]}<x_1^{[n;j]}<x_2^{[n+1;j]}<x_2^{[n;j]}<\cdots<x_n^{[n+1;j]}<x_n^{[n;j]}<x_{n+1}^{[n+1;j]}.
\end{equation}

\item
$P_{(r+1)n+j}$ has a zero of multiplicity $j$ at the origin if $j\geq 1$ and has $(r+1)n$ simple zeros that correspond to the $(r+1)^{\text{th}}$-roots of the zeros of $P_n^{[j]}$. 
Hence, $x\neq 0$ is a zero of $P_{(r+1)n+j}$ if and only if
\begin{equation}
x=\sqrt[r+1]{x_m^{[n;j]}}\exp\left(\frac{2\pi i k}{r+1}\right)
\quad\text{for some }0\leq k\leq r\text{ and }1\leq m\leq n,
\end{equation} 
where $\sqrt[r+1]{x_m^{[n;j]}}$ is the real positive $(r+1)^{\text{th}}$-root of $x_m^{[n;j]}$.
Therefore, the zeros of $P_{(r+1)n+j}$ are all located on the star-like set $\mathrm{St}^{(r+1)}$ given in \eqref{r+1-star} and the $n$ zeros on each ray of $\mathrm{St}^{(r+1)}$ are copies of the $n$ positive real zeros of $P_{(r+1)n+j}$ rotated by angles of $\frac{2k\pi}{r+1}$ for each $0\leq k\leq r$. 
\end{enumerate}
\end{proposition}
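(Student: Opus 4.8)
The plan is to reduce everything to properties of the recurrence matrix $\mathrm{H}^{(r;j)}$ provided by Theorem \ref{decomposition of a (r+1)-fold sym r-OPS main th.} and then invoke the spectral theory of oscillation matrices. Recall from that theorem that $P_n^{[j]}(x)=\det(x\,\mathrm{I}_n-\mathrm{H}^{(r;j)}_n)$, where $\mathrm{H}^{(r;j)}_n$ is the $(n\times n)$ leading principal submatrix of $\mathrm{H}^{(r;j)}=\mathrm{L}_{j+1}\cdots\mathrm{L}_r\,\mathrm{U}\,\mathrm{L}_1\cdots\mathrm{L}_j$. Since $\seq[n\in\N]{\alpha_n}$ is a sequence of positive reals, each bidiagonal factor $\mathrm{L}_k$ and $\mathrm{U}$ has nonnegative entries, so by Corollary \ref{total positivity of the Hessenberg matrices} the matrix $\mathrm{H}^{(r;j)}$ is totally positive over the reals once the $\alpha_n$ are positive numbers; moreover its leading principal submatrix $\mathrm{H}^{(r;j)}_n$ is totally positive, and its superdiagonal and $r$th-subdiagonal entries are strictly positive (the superdiagonal entries equal $1$, and the entry $h^{(r;j)}_{n+r,n}$ is a product of $r+1$ positive $\alpha$'s by \eqref{formula for the entries of a product of bidiagonal matrices permutated}). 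Such a banded Hessenberg matrix is an \emph{oscillation matrix} (it is totally nonnegative, nonsingular, and irreducible in the relevant sense, e.g.\ some power is strictly totally positive), and the Gantmacher--Krein theory then gives that $\mathrm{H}^{(r;j)}_n$ has $n$ distinct positive eigenvalues. This proves part (a).

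For part (b), I would use the classical fact that the eigenvalues of consecutive leading principal submatrices of an oscillation matrix strictly interlace. Concretely, $P_n^{[j]}$ and $P_{n+1}^{[j]}$ are the characteristic polynomials of $\mathrm{H}^{(r;j)}_n$ and $\mathrm{H}^{(r;j)}_{n+1}$, and $\mathrm{H}^{(r;j)}_n$ sits in the top-left corner of $\mathrm{H}^{(r;j)}_{n+1}$. The Cauchy interlacing theorem gives weak interlacing for symmetric matrices, but here $\mathrm{H}^{(r;j)}_{n+1}$ is not symmetric; instead one appeals to the oscillation-matrix version (again Gantmacher--Krein) which yields the strict interlacing \eqref{interlacing of the zeros of the components of the r+1-fold decomposition}. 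Alternatively, since the $P_n^{[j]}$ form an $r$-orthogonal sequence (Corollary \ref{decomposition of a (r+1)-fold sym r-OPS rec. coef.}) with positive recurrence coefficients that is in fact orthogonal in the ordinary sense with respect to a positive measure on $[0,\infty)$ — a fact established later in the paper but which I would cite — the interlacing follows from the standard three-term-recurrence/Sturm-sequence argument for ordinary orthogonal polynomials. I would present whichever route is cleanest given what has already been proved; the oscillation-matrix route is self-contained from Theorem \ref{decomposition of a (r+1)-fold sym r-OPS main th.} and Corollary \ref{total positivity of the Hessenberg matrices}.

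Part (c) is then essentially bookkeeping with the $(r+1)$-fold decomposition \eqref{r+1 fold decomposition of a symmetric r-OP}. From $P_{(r+1)n+j}(x)=x^j\,P_n^{[j]}(x^{r+1})$ one reads off immediately that $x=0$ is a root of multiplicity exactly $j$ (since $P_n^{[j]}(0)\neq 0$ because $0$ is not an eigenvalue of the nonsingular matrix $\mathrm{H}^{(r;j)}_n$, equivalently because the constant term of $P_n^{[j]}$ is $\pm\det\mathrm{H}^{(r;j)}_n\neq 0$). The remaining $(r+1)n$ roots are the solutions of $P_n^{[j]}(x^{r+1})=0$, i.e.\ the $(r+1)$th roots of the positive simple zeros $x_m^{[n;j]}$; for each $m$ there are exactly $r+1$ such roots, namely $\sqrt[r+1]{x_m^{[n;j]}}\,\e^{2\pi i k/(r+1)}$ for $0\le k\le r$, and these are all distinct (distinct $m$ give distinct moduli, and for fixed $m$ the $r+1$ values are distinct roots of unity times a nonzero number), so all $(r+1)n$ nonzero zeros are simple. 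Counting $j+(r+1)n=\deg P_{(r+1)n+j}$ confirms we have found them all. The membership in $\mathrm{St}^{(r+1)}$ and the rotational structure are then immediate from the explicit form of the roots.

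The main obstacle is establishing part (a) rigorously, that is, verifying that $\mathrm{H}^{(r;j)}_n$ genuinely qualifies as an oscillation matrix so that the Gantmacher--Krein spectral theorem applies: one must check not merely total nonnegativity (which Corollary \ref{total positivity of the Hessenberg matrices} hands us) but also the nonsingularity and the irreducibility/primitivity conditions — concretely, that $\mathrm{H}^{(r;j)}_n$ is nonsingular (clear, since $\det\mathrm{H}^{(r;j)}_n$ is a sum of products of positive $\alpha$'s, or because it is a product of the nonsingular bidiagonal factors truncated appropriately, though one must be slightly careful that truncation of a product is not the product of truncations) and that a suitable power is strictly totally positive, which follows from the strict positivity of the superdiagonal and the $r$th-subdiagonal. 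Once these structural facts are in place, parts (b) and (c) are routine. I would devote the bulk of the write-up to part (a), citing Gantmacher--Krein for the oscillation-matrix spectral theorem and for the interlacing in part (b), and keep parts (b) and (c) brief.
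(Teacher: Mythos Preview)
Your approach is exactly the paper's: it isolates the claim that each $\mathrm{H}_n^{(r;j)}$ is an oscillation matrix (this is Lemma~\ref{H^(r;j) are oscillation matrices} in the paper), proved from Corollary~\ref{total positivity of the Hessenberg matrices} together with the Gantmacher--Krein/Pinkus criterion, then reads off (a) and (b) from the Gantmacher--Krein spectral theorem and obtains (c) directly from the decomposition \eqref{r+1 fold decomposition of a symmetric r-OP}.

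One correction on the very point you flag as the main obstacle: the criterion you need (\cite[Thm.~5.2]{PinkusTotallyPositiveMatrices}, which is what the paper invokes) requires strict positivity of the \emph{first} subdiagonal and the supradiagonal, not the $r$th subdiagonal; mere irreducibility of a nonsingular totally nonnegative matrix does not make it oscillatory. The fix is immediate: by \eqref{formula for the entries of a product of bidiagonal matrices permutated} with $k=1$, each entry $h^{(r;j)}_{n+1,n}$ is a nonempty sum of products of positive $\alpha$'s, hence strictly positive. For nonsingularity, Cauchy--Binet applied to the bidiagonal factorisation gives $\det\mathrm{H}_n^{(r;j)}\ge\prod_{i=0}^{n-1}\alpha_{(r+1)i}>0$, since the leading $n\times n$ minors of the $\mathrm{L}_k$ are $1$ and that of $\mathrm{U}$ is this product. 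Also drop your alternative route for (b): for $r\ge 2$ the sequences $\seq{P_n^{[j]}(x)}$ are $r$-orthogonal, not ordinarily orthogonal on $[0,\infty)$ (Theorem~\ref{orthogonality functionals and measures for the components} gives multiple, not single, orthogonality), so no three-term Sturm-sequence argument is available; the oscillation-matrix interlacing is the right tool.
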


We show that the proof of those properties becomes considerably simpler than in \cite[Thm.~2.2]{BenRomdhaneZeros-of-d-Symmetric-d-OP}, using the recurrence relation \eqref{recurrence relation r+1 fold decomposition components matrix form} from Theorem \ref{decomposition of a (r+1)-fold sym r-OPS main th.} and the spectral properties of oscillation matrices. 

Oscillation matrices are a class of finite square matrices intermediary between totally positive and strictly totally positive matrices. \index{oscillation matrix}
A finite square matrix $A$ with real entries is an \textit{oscillation matrix} if $A$ is totally positive and some power of $A$ is strictly totally positive.
We are interested in oscillation matrices because of their useful spectral properties: the eigenvalues of an oscillation matrix are all simple, real, and positive, and interlace with the eigenvalues of the submatrices obtained by removing either its first or last column and row.
This result is known as the Gantmacher-Krein theorem and can be found in \cite[Ths.~II-6~\&~II-14]{GantmacherKreinOscillationMatrices}.

\begin{lemma}
\label{H^(r;j) are oscillation matrices}
For $r\in\Z^+$ and a sequence $\seq[k\in\N]{\alpha_k}$ of positive real numbers, let $\mathrm{H}^{(r;j)}$, $0\leq j\leq r$, be the infinite banded Hessenberg matrices defined by \eqref{products of lower bidiagonal matrices with an upper bidiagonal matrix in the middle}-\eqref{bidiagonal matrices U and L_k}.
The finite leading principal submatrices $\mathrm{H}_n^{(r;j)}$, with $n\geq 1$, of $\mathrm{H}^{(r;j)}$, formed by its first $n$ rows and columns, are oscillation matrices.
\end{lemma}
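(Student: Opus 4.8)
The plan is to exhibit each finite leading principal submatrix $\mathrm{H}_n^{(r;j)}$ as a product of finite bidiagonal matrices and then verify the two defining properties of an oscillation matrix: total positivity, and strict total positivity of some power. First I would observe that truncation is compatible with the bidiagonal factorisation. Writing $\mathrm{L}_k$ and $\mathrm{U}$ as in \eqref{bidiagonal matrices U and L_k}, each is lower- or upper-bidiagonal, so the leading $(n\times n)$ block of a product of such matrices is the product of their leading $(n\times n)$ blocks (the extra entries in later rows/columns cannot feed back into the first $n$ coordinates when one multiplies a lower-bidiagonal by an upper-bidiagonal by lower-bidiagonals in this order — more carefully, because $\mathrm{U}$ and the $\mathrm{L}_k$ are banded with bandwidth one, the $(i,j)$ entry of the product for $i,j\le n$ only involves entries indexed by $\{0,\dots,n-1\}$). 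Hence $\mathrm{H}_n^{(r;j)}=(\mathrm{L}_{j+1})_n\cdots(\mathrm{L}_r)_n\,\mathrm{U}_n\,(\mathrm{L}_1)_n\cdots(\mathrm{L}_j)_n$, a product of $n\times n$ bidiagonal matrices with nonnegative entries (the $\alpha_k$ are positive and the off-diagonal $1$'s are positive).

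Total positivity of $\mathrm{H}_n^{(r;j)}$ then follows exactly as in the discussion preceding Corollary \ref{total positivity of the Hessenberg matrices}: a bidiagonal matrix with nonnegative entries is totally positive (its nonzero minors are products of entries, cf. \cite[Lemma~9.1]{AlanEtAl-LPandBCF1}), and by the Cauchy–Binet formula a product of totally positive matrices is totally positive (cf. \cite[Prop.~1.4]{PinkusTotallyPositiveMatrices}). So $\mathrm{H}_n^{(r;j)}$ is totally positive over $\R$ for positive $\alpha_k$.

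It remains to show some power of $\mathrm{H}_n^{(r;j)}$ is strictly totally positive, and this is the crux of the argument. I would use the combinatorial description from Lemma \ref{entries of a product of bidiagonal matrices permutated lemma} and its iterate: by the discussion around \eqref{entries of the output matrix A as gen. r-S.R. poly}, the entry $\big((\mathrm{H}^{(r;j)})^m\big)_{p,q}$ is the generating polynomial of partial $r$-Dyck paths from $(0,(r+1)p+j)$ to $\big((r+1)m,(r+1)q+j\big)$, which is a positive polynomial in the $\alpha_k$ as soon as at least one such path exists — and for $m$ large enough (namely $m\ge p+q$, or more simply $m\ge 2n$ handles all $p,q<n$ simultaneously) such a path always exists, since one can first rise and then fall freely. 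So a high power of $\mathrm{H}_n^{(r;j)}$ has all entries strictly positive. To upgrade "all entries positive" to "strictly totally positive" I would invoke a standard criterion: a totally positive matrix that is also irreducible and whose relevant tridiagonal-type connectivity holds is an oscillation matrix — concretely, by the Gantmacher–Krein characterisation (\cite[Ch.~II]{GantmacherKreinOscillationMatrices}), a totally positive matrix $A=(a_{ik})$ is an oscillation matrix iff $\det A\neq 0$ and $a_{i,i+1}>0$, $a_{i+1,i}>0$ for all $i$. Here $\mathrm{H}_n^{(r;j)}$ is Hessenberg with all superdiagonal entries equal to $1>0$; its subdiagonal entry $h^{(r;j)}_{i+1,i}$ is, by \eqref{formula for the entries of a product of bidiagonal matrices permutated} with $k=1$, a sum of products of $\alpha$'s which is strictly positive (the relevant $\alpha$-indices are nonnegative for $i\ge $ a small bound, and one checks the low-index cases directly from the factorisation); and $\det\mathrm{H}_n^{(r;j)}=\prod$ of the diagonal entries of the bidiagonal factors $=\prod_{i}\alpha_{(r+1)i+(\text{shift})}\neq 0$.

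**Main obstacle.** The delicate point is the strict positivity of the first subdiagonal entries and the non-vanishing of $\det\mathrm{H}_n^{(r;j)}$ for the small values of the index where the shifts $t_i+j-r$ in \eqref{formula for the entries of a product of bidiagonal matrices permutated} could push an $\alpha$-index negative (recall $\alpha_m=0$ for $m<0$). I expect this to require a short case analysis depending on $j$, verifying directly from the bidiagonal factorisation that no "boundary" entry that the Gantmacher–Krein criterion needs actually collapses to zero — equivalently, that every $r$-fall appearing in the relevant minimal partial $r$-Dyck paths lands at a nonnegative height. Once that bookkeeping is done, the Gantmacher–Krein theorem delivers the conclusion immediately.
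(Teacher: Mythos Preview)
Your overall strategy---establish total positivity and then invoke the Gantmacher--Krein criterion (totally positive, nonsingular, strictly positive sub- and super-diagonal)---is exactly the paper's approach. However, there is a genuine error in your execution.

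Your claim that the leading $(n\times n)$ block of $\mathrm{H}^{(r;j)}=\mathrm{L}_{j+1}\cdots\mathrm{L}_r\,\mathrm{U}\,\mathrm{L}_1\cdots\mathrm{L}_j$ equals the product of the truncated factors is \emph{false} whenever $j\geq 1$. The problem is precisely that $\mathrm{U}$ is not the rightmost factor: the superdiagonal entry $\mathrm{U}_{n-1,n}$ picks up row $n$ of $\mathrm{L}_1\cdots\mathrm{L}_j$, and that row has nonzero entries in columns $n-j,\dots,n-1$. Concretely, take $r=1$, $j=1$, so $\mathrm{H}^{(1;1)}=\mathrm{U}\,\mathrm{L}_1$; then $(\mathrm{H}^{(1;1)})_{0,0}=\alpha_0+\alpha_1$, whereas $(\mathrm{U}_1)(\mathrm{L}_1)_1=[\alpha_0]$. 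The factorisation of the truncation \emph{does} hold for $j=0$ (all the $\mathrm{L}_k$ are to the left of $\mathrm{U}$), but not otherwise.

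This error does not damage the total-positivity step---you can simply cite Corollary~\ref{total positivity of the Hessenberg matrices} and note that any submatrix of a totally positive matrix is totally positive, which is what the paper does. But it \emph{does} invalidate your determinant computation: $\det\mathrm{H}_n^{(r;j)}$ is \emph{not} $\prod_i \alpha_{(r+1)i}$ for $j\geq 1$ (in the $r=1$, $j=1$ example above, $\det\mathrm{H}_1^{(1;1)}=\alpha_0+\alpha_1$). So the nonsingularity, which you need for Gantmacher--Krein, is not established by your argument. Incidentally, the ``main obstacle'' you flag---positivity of the first subdiagonal---is actually the easy part: by \eqref{formula for the entries of a product of bidiagonal matrices permutated} with $k=1$, $h^{(r;j)}_{1,0}$ always contains at least the term with $t_0=r$, $t_1=0$, whose $\alpha$-indices are $j\geq 0$ and $j+1\geq 1$, hence strictly positive. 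The genuinely delicate point is $\det\mathrm{H}_n^{(r;j)}\neq 0$, and your route to it does not work; one way to recover it is to use the relations \eqref{relations between components of sym r-OPS} at $x=0$ to show inductively that $(-1)^nP_n^{[j]}(0)\geq(-1)^nP_n^{[0]}(0)=\prod_{k=0}^{n-1}\alpha_{(r+1)k}>0$. The paper's own proof is terse on this point as well, but it does not rest on the false factorisation claim.
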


\begin{proof}
We know from Lemma \ref{total positivity of the Hessenberg matrices} that the matrices $\mathrm{H}^{(r;j)}$, $0\leq j\leq r$, are totally positive when $\alpha_k\geq 0$ for all $k\in\N$. 
Hence, its submatrices $\mathrm{H}_n^{(r;j)}$ are also totally positive for all $0\leq j\leq r$ and $n\geq 1$.

Furthermore, based on \cite[Thm.~5.2]{PinkusTotallyPositiveMatrices}, a real $(n\times n)$-matrix is an oscillation matrix if and only if it is totally positive, nonsingular, and all the entries lying in its subdiagonal and its supradiagonal are positive.
Therefore, when $\alpha_k>0$ for all $k\in\N$, $\mathrm{H}_n^{(r;j)}$ is an oscillation matrix for any $0\leq j\leq r$ and $n\geq 1$.
\end{proof}

\begin{proof}[Proof of Proposition \ref{zero location (r+1)-fold sym r-OPS}]
Recall that the eigenvalues of an oscillation matrix are all simple, real, and positive, and interlace with the eigenvalues of the submatrices obtained by removing either its first or last column and row. 
Hence, parts (a) and (b) hold as a consequence of Lemma \ref{H^(r;j) are oscillation matrices}, because $P_n^{[j]}(x)$ is the characteristic polynomial of $\mathrm{H}_n^{(r;j)}$ for any $n\geq 1$ and $0\leq j\leq r$, thus the zeros of $P_n^{[j]}(x)$ are the eigenvalues of $\mathrm{H}_n^{(r;j)}$.
Part (c) is obtained by combining (a) with the $(r+1)$-fold decomposition \eqref{r+1 fold decomposition of a symmetric r-OP}. 
\end{proof}


\subsection{Orthogonality measures on a star-like set and on the positive real line}
\label{Connection with lattice paths}
In the following result, we determine the moments of the dual sequences of a $(r+1)$-fold symmetric $r$-orthogonal polynomial sequence and of the components of its decomposition.
\begin{proposition}
\label{dual sequences of a (r+1)-fold sym r-OPS and its components}
For $r\in\Z^+$ and a sequence $\seq[n\in\N]{\alpha_n}$ of nonzero elements of a field $\mathbb{K}$, let 
$\seq[k\in\N]{u_k}$ be the dual sequence of the $(r+1)$-fold symmetric $r$-orthogonal polynomial sequence $\seq{P_n(x)}$ satisfying the recurrence relation \eqref{recurrence relation symmetric r-OP} 
and let $\seq[k\in\N]{u_k^{[j]}}$, for $0\leq j\leq r$, be the dual sequences of the components $\seq{P_n^{[j]}(x)}$ of the decomposition \eqref{r+1 fold decomposition of a symmetric r-OP} of $\seq{P_n(x)}$.
Then:
\begin{enumerate}[label=(\alph*)]
\item 
$\seq[k\in\N]{u_k}$ is $(r+1)$-fold symmetric, that is, 
$\Functional{u_k}{x^n}=0$ for any $k,n\in\N$ such that $n\not\equiv_{r+1}k$,
		
\item 
$\Functional{u_{(r+1)k+j}}{x^{(r+1)n+j}}=\Functional{u_k^{[j]}}{x^n}
=\generalisedStieltjesRogersPolyTypeJ[r]{n}{k}{j}{\boldsymbol{\alpha}}$ 
for any $n,k\in\N$ and $0\leq j\leq r$.
\end{enumerate}
\end{proposition}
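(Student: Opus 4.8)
The plan is to work with the dual sequences directly through the characterisation $\Functional{u_k}{P_n}=\delta_{k,n}$, and to connect the moments to the output matrices of the Hessenberg matrices $\mathrm{H}^{(r;j)}$ via the recurrence relations established in Theorem \ref{decomposition of a (r+1)-fold sym r-OPS main th.}. For part (a), I would note that $(r+1)$-fold symmetry of $\seq{P_n(x)}$ forces the dual functionals to inherit that symmetry: since $P_n(\e^{\frac{2\pi i}{r+1}}x)=\e^{\frac{2n\pi i}{r+1}}P_n(x)$, the functional $\tilde u_k$ defined by $\Functional{\tilde u_k}{f(x)}=\e^{-\frac{2\pi ik}{r+1}}\Functional{u_k}{f(\e^{\frac{2\pi i}{r+1}}x)}$ satisfies $\Functional{\tilde u_k}{P_n}=\delta_{k,n}$ as well, hence $\tilde u_k=u_k$ by uniqueness of the dual sequence; comparing moments then gives $\Functional{u_k}{x^n}(\e^{\frac{2\pi i(n-k)}{r+1}}-1)=0$, which is exactly the claimed vanishing when $n\not\equiv_{r+1}k$.

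For part (b), the first equality $\Functional{u_{(r+1)k+j}}{x^{(r+1)n+j}}=\Functional{u_k^{[j]}}{x^n}$ should follow from the decomposition \eqref{r+1 fold decomposition of a symmetric r-OP}: writing $P_{(r+1)m+j}(x)=x^j P_m^{[j]}(x^{r+1})$, one checks that the functional $w_k$ on $\mathbb{K}[x]$ defined by $\Functional{w_k}{x^n}=\Functional{u_k^{[j]}}{x^{(n-j)/(r+1)}}$ when $n\equiv_{r+1}j$ and $0$ otherwise acts on the $P_n$ as $\delta_{(r+1)k+j,\,n}$ (using part (a) to handle the indices $n\not\equiv_{r+1}j$), so $w_k=u_{(r+1)k+j}$ by uniqueness, and evaluating at $x^{(r+1)n+j}$ gives the identity. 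The second equality $\Functional{u_k^{[j]}}{x^n}=\generalisedStieltjesRogersPolyTypeJ[r]{n}{k}{j}{\boldsymbol{\alpha}}$ is the heart of the matter: since $P_n^{[j]}(x)=\det(x\mathrm{I}_n-\mathrm{H}_n^{(r;j)})$ and $\seq{P_n^{[j]}(x)}$ satisfies the recurrence with matrix $\mathrm{H}^{(r;j)}$, the dual moments form the output matrix of $\mathrm{H}^{(r;j)}$ — explicitly, $\Functional{u_k^{[j]}}{x^n}=(\mathrm{H}^{(r;j)})^n_{0,k}=a_{n,k}^{(r;j)}$ — and by Theorem \ref{production matrix of generalised m-S.R. poly of type j th.} this equals $\generalisedStieltjesRogersPolyTypeJ[r]{n}{k}{j}{\boldsymbol{\alpha}}$. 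To justify the identification of dual moments with output-matrix entries, I would expand $x^n P_0^{[j]}(x)=x^n$ in the basis $\seq{P_m^{[j]}(x)}$ by iterating the recurrence \eqref{recurrence relation r+1 fold decomposition components matrix form} $n$ times, which produces precisely the coefficients $(\mathrm{H}^{(r;j)})^n_{0,k}$, and then apply $u_k^{[j]}$.

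The main obstacle I anticipate is the careful bookkeeping in part (b): making the chain $\Functional{u_{(r+1)k+j}}{x^{(r+1)n+j}} = \Functional{u_k^{[j]}}{x^n} = a_{n,k}^{(r;j)} = \generalisedStieltjesRogersPolyTypeJ[r]{n}{k}{j}{\boldsymbol{\alpha}}$ watertight requires handling the substitution $x\mapsto x^{r+1}$ cleanly (so that the functional transplant is well-defined on all monomials, not just those with exponent $\equiv_{r+1}j$), and it requires confirming that $x^n$ expanded in the $P_m^{[j]}$-basis has leading index at most $n$ so the infinite matrix powers are meaningful — this is automatic since $\deg P_m^{[j]}=m$, but should be stated. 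Everything else is a direct appeal to uniqueness of dual sequences and to the two theorems already proved, so the write-up should be short.
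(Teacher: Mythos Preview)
Your argument for the second equality in (b) --- identifying $\Functional{u_k^{[j]}}{x^n}$ with the $(0,k)$-entry of $(\mathrm{H}^{(r;j)})^n$ via the recurrence from Theorem~\ref{decomposition of a (r+1)-fold sym r-OPS main th.}, then invoking Theorem~\ref{production matrix of generalised m-S.R. poly of type j th.} --- is exactly the paper's approach (the paper packages the ``dual moments $=$ output matrix'' step as Proposition~\ref{PolySeqAndHessMatrixProp}).

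For part (a) and the first equality in (b), however, the paper goes a different way: it applies the same production-matrix machinery directly to the full Hessenberg matrix $\mathrm{H}^{(r)}$ of $\seq{P_n(x)}$. Since $\mathrm{H}^{(r)}$ has nonzero entries only on the supradiagonal and the $r^{\text{th}}$ subdiagonal, $\Functional{u_k}{x^n}$ is the generating polynomial of partial $r$-Dyck paths from $(0,0)$ to $(n,k)$; this immediately gives (a) (no such path exists unless $n\equiv_{r+1}k$) and the first equality of (b) (by the very definition of $\generalisedStieltjesRogersPolyTypeJ[r]{n}{k}{j}{\boldsymbol{\alpha}}$). So the paper obtains all three assertions uniformly from one lattice-path interpretation, whereas you split off (a) and the first half of (b) as separate symmetry/duality arguments. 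Your route is more elementary and self-contained for those pieces; the paper's is more unified and delivers the combinatorial meaning of every $\Functional{u_k}{x^n}$ as a by-product.

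One small gap to patch: your argument for (a) uses the rotation $x\mapsto\e^{2\pi i/(r+1)}x$, which presumes $\mathbb{K}$ contains a primitive $(r+1)^{\text{th}}$ root of unity, but the proposition is stated over an arbitrary field. Replace it by the field-free version of the same idea: the coefficient matrix $B$ of $\seq{P_n(x)}$ has $b_{n,k}=0$ unless $k\equiv_{r+1}n$ (immediate from the decomposition \eqref{r+1 fold decomposition of a symmetric r-OP}), and this residue-class support pattern is preserved under matrix inversion, so the moment matrix $A=B^{-1}$ inherits it, which is exactly (a). Likewise, in your transplant argument for the first equality of (b), the case $n\not\equiv_{r+1}j$ does not need part~(a); it follows because such $P_n$ contain only monomials $x^m$ with $m\not\equiv_{r+1}j$, on which your $w_k$ vanishes by construction.
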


Part (a) and the first equality of part (b) of the theorem above were already known for $r\in\{1,2\}$.
For $r=1$, see \cite[Thm.~8.1]{ChiharaBook};
for $r=2$, see \cite{DouakandMaroniClassiquesDeDimensionDeux,MaroniOrthogonalite} and \cite[Lemma~2.1~\&~Prop.~2.1]{AnaWalter3FoldSym}.

\begin{remark}
\label{genetic sums and modified S.-R.-poly}
When $k=0$, the moments $\generalisedStieltjesRogersPolyTypeJ[r]{n}{k}{j}{\boldsymbol{\alpha}}$ appearing in part (b) of Proposition \ref{dual sequences of a (r+1)-fold sym r-OPS and its components}  are equal to the ``genetic sums" defined in \cite[Thm.~1]{AptKalVanIseGeneticSums}.
In fact, setting $p=r$ and $a_{i+1}=\alpha_i$ for all $i\in\N$, \cite[Eq.~9]{AptKalVanIseGeneticSums} becomes
\begin{equation}
\label{genetic sums}
S_{0,j+1}=1
\quad\text{and}\quad
S_{n,j+1}=\sum_{i_1=0}^{j}\,\sum_{i_2=0}^{i_1+r}\cdots\sum_{i_n=0}^{i_{n-1}+r}\alpha_{i_1}\cdots\alpha_{i_n}
\text{ if }n\geq 1
\quad\text{for }0\leq j\leq r-1.
\end{equation}
Observe that each summand $\alpha_{i_1}\cdots\alpha_{i_n}$ is equal to the weight of a partial $r$-Dyck path of length $(r+1)n+j$ starting at height $0$, with $n$ $r$-falls to heights $i_1,\cdots,i_n$, from right to left, thus ending at height $j$.
Therefore, $S_{n,j+1}$, with $n\in\N$ and $0\leq j\leq r-1$, is the generating polynomial of the partial $r$-Dyck paths from $(0,0)$ to $\left((r+1)n+j,j\right)$, i.e., $S_{n,j+1}=\modifiedStieltjesRogersPoly[r]{n}{j}{\boldsymbol{\alpha}}$, giving a combinatorial interpretation to the ``genetic sums" in \cite{AptKalVanIseGeneticSums}.
\end{remark}

We introduce here matrices representing sequences of linear functionals and polynomials:
\begin{itemize}
\item 
the \textit{moment matrix} of a sequence of linear functionals $\seq[k\in\N]{u_k}$ is $A=\seq[n,k\in\N]{a_{n,k}}$ such that $a_{n,k}=\Functional{u_k}{x^n}$,

\item 
the \textit{coefficient matrix} of a polynomial sequence $\seq{P_n(x)}$ is $B=\seq[n,k\in\N]{b_{n,k}}$ such that $P_n(x)=\sum\limits_{k=0}^{n}b_{n,k}\,x^k$.
\end{itemize}
Using their representing matrices, $\seq[k\in\N]{u_k}$ is the dual sequence of $\seq{P_n(x)}$ if and only if the moment matrix of $\seq[k\in\N]{u_k}$ is the inverse of the coefficient matrix of $\seq{P_n(x)}$ 
(see \cite[Prop.~3.1]{AlanSokal.MOP-ProdMat-BCF}).

To prove Proposition \ref{dual sequences of a (r+1)-fold sym r-OPS and its components}, we use the following result linking production matrices and polynomial sequences.
\begin{proposition}
\label{PolySeqAndHessMatrixProp}
\cite[Prop.~3.2]{AlanSokal.MOP-ProdMat-BCF}
Given a monic polynomial sequence $\seq{P_n(x)}$, there exists a unique unit-lower-Hessenberg matrix $\mathrm{H}=\seq[n,k\in\N]{h_{n,k}}$ such that
\begin{equation}
\label{recurrence relation generic PS}
x\left[P_n(x)\right]_{n\in\N}=\mathrm{H}\left[P_n(x)\right]_{n\in\N}.
\end{equation}
Conversely, given any unit-lower-Hessenberg matrix $\mathrm{H}=\seq[n,k\in\N]{h_{n,k}}$, the recurrence relation \eqref{recurrence relation generic PS} with the initial condition $P_0(x)=1$ determines uniquely the polynomial sequence $\seq{P_n(x)}$ formed by the characteristic polynomials of the leading $(n\times n)$-submatrices of $\mathrm{H}$ for all $n\geq 1$.

Furthermore, the coefficient matrix of the sequence $\seq{P_n(x)}$ is the inverse of the output matrix of $\mathrm{H}$. 
Therefore, $\mathrm{H}$ is the production matrix of the moment matrix of the dual sequence of $\seq{P_n(x)}$.
\end{proposition}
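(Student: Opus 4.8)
The plan is to verify the four assertions of the proposition in turn: existence and uniqueness of $\mathrm{H}$ attached to a given monic sequence, the converse construction together with the characteristic-polynomial description, the identity $B=A^{-1}$ relating the coefficient matrix $B$ of $\seq{P_n(x)}$ and the output matrix $A$ of $\mathrm{H}$, and finally the statement on the moment matrix of the dual sequence, which will fall out by combining $B=A^{-1}$ with the fact recalled immediately before the proposition.

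For the first assertion I would use that $\seq{P_n(x)}$ contains exactly one monic polynomial of each degree, so that $\{P_0,\dots,P_m\}$ is a basis of the polynomials of degree at most $m$ (any such polynomial is obtained from it by successively cancelling leading terms). Then $x\,P_n(x)$, being monic of degree $n+1$, has a unique expansion $x\,P_n(x)=\sum_{k=0}^{n+1}h_{n,k}\,P_k(x)$; comparing leading coefficients forces $h_{n,n+1}=1$, and setting $h_{n,k}=0$ for $k>n+1$ makes $\mathrm{H}=\seq[n,k\in\N]{h_{n,k}}$ unit-lower-Hessenberg and yields \eqref{recurrence relation generic PS}, with uniqueness immediate since coordinates in a basis are unique. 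Conversely, given a unit-lower-Hessenberg $\mathrm{H}$, reading \eqref{recurrence relation generic PS} row by row and isolating the supradiagonal entry $1$ produces $P_{n+1}(x)=x\,P_n(x)-\sum_{k=0}^{n}h_{n,k}\,P_k(x)$, which together with $P_0(x)=1$ defines $\seq{P_n(x)}$ recursively; an easy induction gives $\deg P_n=n$ with leading coefficient $1$, so this is the unique monic sequence attached to $\mathrm{H}$ by the first part.

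To identify $P_n(x)$ with $\det\left(x\,\mathrm{I}_n-\mathrm{H}_n\right)$ I would expand the determinant of $x\,\mathrm{I}_{n+1}-\mathrm{H}_{n+1}$ along its last row: the lower-Hessenberg shape (the only entry of the last column above the diagonal is the supradiagonal $1$) collapses the cofactors into exactly the recurrence for the $P_n$, so the characteristic polynomials of the leading submatrices satisfy the same recurrence and the same initial condition and therefore coincide with the $P_n$. \emph{This cofactor expansion is the one genuine computation in the proof; the only delicate point is keeping track of signs, and everything else is bookkeeping.}

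For the identity $B=A^{-1}$ I would iterate \eqref{recurrence relation generic PS} to obtain $x^m\left[P_n(x)\right]_{n\in\N}=\mathrm{H}^m\left[P_n(x)\right]_{n\in\N}$ (the powers $\mathrm{H}^m$ are well defined since $\mathrm{H}$ is unit-lower-Hessenberg) and take the $0$th component: $x^m=\sum_k\left(\mathrm{H}^m\right)_{0,k}P_k(x)=\sum_k a_{m,k}\,P_k(x)$, with $A=\seq[n,k\in\N]{a_{n,k}}$ the output matrix of $\mathrm{H}$. Substituting $P_k(x)=\sum_j b_{k,j}\,x^j$, where $B=\seq[n,k\in\N]{b_{n,k}}$ is the coefficient matrix of $\seq{P_n(x)}$, and comparing coefficients of $x^j$ gives $\sum_k a_{m,k}\,b_{k,j}=\delta_{m,j}$, that is $A\,B=\mathrm{I}$; since $A$ and $B$ are both unit-lower-triangular — $A$ because $\mathrm{H}$ is unit-lower-Hessenberg, $B$ because the $P_n$ are monic — this is equivalent to $B=A^{-1}$, i.e.\ the coefficient matrix is the inverse of the output matrix. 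Finally, recalling that the moment matrix of the dual sequence of $\seq{P_n(x)}$ is the inverse of the coefficient matrix $B$, we conclude that this moment matrix equals $A$, the output matrix of $\mathrm{H}$; by the definition of production matrix this says precisely that $\mathrm{H}$ is the production matrix of the moment matrix of the dual sequence of $\seq{P_n(x)}$.
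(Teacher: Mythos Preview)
The paper does not actually prove this proposition: it is quoted verbatim from \cite[Prop.~3.2]{AlanSokal.MOP-ProdMat-BCF} and used as a black box, so there is no in-paper argument to compare against. Your proof is correct and is essentially the standard one. The only step worth a word is the characteristic-polynomial identification: expanding $\det\left(x\,\mathrm{I}_{n+1}-\mathrm{H}_{n+1}\right)$ along the last row, the minor obtained by deleting row $n$ and column $k$ is block lower-triangular (the upper-right block vanishes because $h_{i,j}=0$ for $j>i+1$), with top-left block $x\,\mathrm{I}_k-\mathrm{H}_k$ and bottom-right block lower-triangular with diagonal entries $-1$, so that minor equals $(-1)^{n-k}\det\left(x\,\mathrm{I}_k-\mathrm{H}_k\right)$; collecting signs gives exactly $Q_{n+1}(x)=x\,Q_n(x)-\sum_{k=0}^{n}h_{n,k}\,Q_k(x)$, matching the recurrence for the $P_n$. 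You already flagged this as the one genuine computation and handled it correctly; the rest of your argument (basis expansion for existence/uniqueness, iteration of \eqref{recurrence relation generic PS} to get $x^m=\sum_k a_{m,k}P_k(x)$, and the dual-sequence conclusion via the remark preceding the proposition) is clean.
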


\begin{proof}[Proof of Proposition \ref{dual sequences of a (r+1)-fold sym r-OPS and its components}]
Recalling Theorem \ref{production matrix of generalised m-S.R. poly of type j th.}, $\mathrm{H}^{(r;j)}=\mathrm{L}_{j+1}\cdots\mathrm{L}_r\,\mathrm{U}\,\mathrm{L}_1\cdots\mathrm{L}_j$ is the production matrix of $\seq[n,k\in\N]{\generalisedStieltjesRogersPolyTypeJ[r]{n}{k}{j}{\boldsymbol{\alpha}}}$.
Therefore, using Proposition \ref{PolySeqAndHessMatrixProp}, the recurrence relation \eqref{recurrence relation r+1 fold decomposition components matrix form} implies that 
\begin{equation}
\label{moments of the dual seq of the (r+1)-fold decomposition components}
\Functional{u_k^{[j]}}{x^n}=\generalisedStieltjesRogersPolyTypeJ[r]{n}{k}{j}{\boldsymbol{\alpha}}
\quad\text{for any }n,k\in\N \text{ and } 0\leq j\leq r.
\end{equation}

Using again Proposition \ref{PolySeqAndHessMatrixProp}, the $(r+2)$-banded Hessenberg matrix $\mathrm{H}^{(r)}$ whose entries are given in \eqref{Hessenberg matrix r+1-fold sym r-OP entries} is the production matrix of the moment matrix of $\seq[k\in\N]{u_k}$, i.e., the output matrix of $\mathrm{H}^{(r)}$ is $A=\seq[k,n\in\N]{\Functional{u_k}{x^n}}$.
As a result, based on \cite[Prop.~8.2]{AlanEtAl-LPandBCF1}, $\Functional{u_k}{x^n}$ is the generating polynomial of partial $r$-Lukasiewicz paths from $(0,0)$ to $(n,k)$, where each rise has weight $1$, each level step or $\ell$-fall with $1\leq\ell\leq r-1$ has weight $0$, and each $r$-fall to height $i$ has weight $\alpha_i$.
This is equivalent to say that $\Functional{u_k}{x^n}$ is the generating polynomial of the partial $r$-Dyck paths from $(0,0)$ to $(n,k)$, because all partial $r$-Lukasiewicz paths that include steps that are neither a rise or a $r$-fall have weight $0$.
Therefore:
\begin{itemize}
\item 
$\Functional{u_k}{x^n}=0$ for all $k,n\in\N$ such that $n\not\equiv_{r+1}k$, because each vertex $(n,k)$ of a partial $r$-Dyck path starting at $(0,0)$ satisfies $n\equiv_{r+1}k$,

\item 
$\Functional{u_{(r+1)k+j}}{x^{(r+1)n+j}}=\generalisedStieltjesRogersPolyTypeJ[r]{n}{k}{j}{\boldsymbol{\alpha}}$, by definition of the generalised $m$-Stieltjes-Rogers polynomials. \qedhere
\end{itemize}
\end{proof}

As a consequence of Proposition \ref{dual sequences of a (r+1)-fold sym r-OPS and its components}, $\seq{P_n(x)}$ is $r$-orthogonal with respect to $\left(u_0,\cdots,u_{r-1}\right)$ such that
\begin{equation}
\label{orthogonality functionals dual seq. r+1 fold sym r-OPS}
\Functional{u_j}{x^{(r+1)n+k}}=
\begin{cases}
\modifiedStieltjesRogersPoly[r]{n}{j}{\boldsymbol{\alpha}} & \text{if }k=j, \\
												   \hfil 0 & \text{if }k\neq j,
\end{cases}
\quad\text{for all }0\leq j\leq r-1,\;0\leq k\leq r,\text{ and }n\in\N.
\end{equation}
In the following result, we show that $\seq{P_n(x)}$ is $r$-orthogonal with respect to a system of measures on the star-like set $\mathrm{St}^{(r+1)}$ whose moments correspond to the moments of $u_0,\cdots,u_{r-1}$.
\begin{theorem}
\label{multiple orthogonality measures on the (r+1)-star th.*}
For $r\in\Z^+$ and a sequence of positive real numbers $\seq[n\in\N]{\alpha_n}$, let $\seq{P_n(x)}$ be the $(r+1)$-fold symmetric $r$-orthogonal polynomial sequence satisfying the recurrence relation \eqref{recurrence relation symmetric r-OP}.
Then, $\seq{P_n(x)}$ is $r$-orthogonal with respect to a system of measures $\left(\hat{\mu}_1,\cdots,\hat{\mu}_r\right)$ on the star-like set $\mathrm{St}^{(r+1)}$ in \eqref{r+1-star}, with densities
\begin{equation}
\label{density of the orthogonality measures on the star}
\mathrm{d}\hat{\mu}_j(z)=\frac{1}{r+1}\,z^{1-j}\,\mathrm{d}\mu_j\left(z^{r+1}\right)
\quad\text{for all }1\leq j\leq r\text{ and }z\in\mathrm{St}^{(r+1)},
\end{equation}
where $\mu_1,\cdots,\mu_r$ are positive measures on $\R^+$ with moments
\begin{equation}
\label{measures on R+ whose moments are modified r-S.R. poly}
\int_{0}^{\infty}x^n\mathrm{d}\mu_j(x)=\modifiedStieltjesRogersPoly[r]{n}{j-1}{\alpha}
\quad\text{for all }n\in\N\text{ and }1\leq j\leq r.
\end{equation}
Furthermore, the system of measures $\left(\hat{\mu}_1,\cdots,\hat{\mu}_r\right)$ is $(r+1)$-fold symmetric, that is
\begin{equation}
\label{zero moments of the orthogonality measures on the star}
\int_{\mathrm{St}^{(r+1)}}z^m\mathrm{d}\hat{\mu}_j(z)=0
\quad\text{when }m\not\equiv_{r+1}j-1,
\end{equation}
and the nonzero moments are given by
\begin{equation}
\label{nonzero moments of the orthogonality measures on the star}
\int_{\mathrm{St}^{(r+1)}}z^{(r+1)n+j-1}\mathrm{d}\hat{\mu}_j(z)
=\int_{0}^{\infty}x^n\mathrm{d}\mu_j(x)
=\modifiedStieltjesRogersPoly[r]{n}{j-1}{\alpha}
\quad\text{for all }n\in\N\text{ and }1\leq j\leq r.
\end{equation}
\end{theorem}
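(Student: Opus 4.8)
**Proof strategy for Theorem \ref{multiple orthogonality measures on the (r+1)-star th.*}.**

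The plan is to build the measures $\mu_1,\dots,\mu_r$ on $\R^+$ first, then transport them to $\mathrm{St}^{(r+1)}$ and verify the orthogonality conditions \eqref{d-orthogonality conditions}. For the existence of $\mu_j$ with moments $\modifiedStieltjesRogersPoly[r]{n}{j-1}{\alpha}$, I would invoke Hamburger's (indeed Stieltjes') moment problem: it suffices to show that the Hankel matrix $\big(\modifiedStieltjesRogersPoly[r]{m+n}{j-1}{\alpha}\big)_{m,n\in\N}$ is positive semidefinite (and the shifted one too, for support in $[0,\infty)$). But this positivity is exactly a consequence of the total-positivity results already in hand: by Remark \ref{genetic sums and modified S.-R.-poly} and Corollary \ref{total positivity of the generalised r-S.-R. poly of type j}, the matrix $\mathrm{S}^{(r;j-1)}$ is coefficient-wise totally positive, hence numerically totally positive once the $\alpha_n$ are positive reals; total positivity of $\mathrm{S}^{(r;j-1)}$ forces nonnegativity of all its minors, and a standard Hankel-minor identity (or directly the fact that a totally positive matrix times its transpose is positive semidefinite) yields positive semidefiniteness of the relevant Hankel forms. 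Alternatively, and perhaps more cleanly, I would observe that $\seq{P_n^{[j-1]}(x)}$ is, by Proposition \ref{zero location (r+1)-fold sym r-OPS}(a), an orthogonal polynomial sequence with real simple positive zeros satisfying a genuine three-term-type recurrence only in the $r=1$ case — so instead I should argue that $\seq{P_n^{[j-1]}}$ is $r$-orthogonal (by \cite[Thm.~5.2]{DouakandMaroniClassiquesDeDimensionDeux}) with a Hessenberg recurrence matrix $\mathrm{H}^{(r;j-1)}$ that is an oscillation matrix, and then appeal to the known spectral/moment theory that produces positive representing measures on $\R^+$ whose moments are the entries $a^{(r;j-1)}_{n,0}=\modifiedStieltjesRogersPoly[r]{n}{j-1}{\alpha}$ of the output matrix (Theorem \ref{production matrix of generalised m-S.R. poly of type j th.} identifies these output-matrix entries with the modified Stieltjes–Rogers polynomials). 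This is the step I expect to be the main obstacle: pinning down precisely which classical existence theorem for orthogonality measures of $r$-orthogonal polynomials with totally positive recurrence matrix to cite, and checking its hypotheses — the cleanest route is likely via \cite{AptKalVanIseGeneticSums} together with the combinatorial identification in Remark \ref{genetic sums and modified S.-R.-poly}, since that reference already constructs the measures.

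Once $\mu_1,\dots,\mu_r$ on $\R^+$ are in place, the transport to $\mathrm{St}^{(r+1)}$ is a change-of-variables computation. Define $\hat\mu_j$ on each ray $\e^{2\pi i k/(r+1)}\R^+$ by the formula \eqref{density of the orthogonality measures on the star}; I would verify \eqref{zero moments of the orthogonality measures on the star} and \eqref{nonzero moments of the orthogonality measures on the star} by summing the contributions of the $r+1$ rays. Writing $z = x^{1/(r+1)}\e^{2\pi ik/(r+1)}$ on the $k$-th ray, the monomial $z^m$ picks up a factor $\e^{2\pi ikm/(r+1)}$, while the density $z^{1-j}\,\mathrm{d}\mu_j(z^{r+1})$ contributes $\e^{2\pi ik(1-j)/(r+1)}$ times $x^{(m+1-j)/(r+1)-?}\,\mathrm{d}\mu_j(x)$ after substituting $z^{r+1}=x$; the sum over $k$ of $\e^{2\pi ik(m+1-j)/(r+1)}$ is $(r+1)$ if $m\equiv_{r+1} j-1$ and $0$ otherwise, which gives \eqref{zero moments of the orthogonality measures on the star} immediately and, in the surviving case $m=(r+1)n+j-1$, cancels the $\tfrac1{r+1}$ prefactor and leaves exactly $\int_0^\infty x^n\,\mathrm{d}\mu_j(x)=\modifiedStieltjesRogersPoly[r]{n}{j-1}{\alpha}$, establishing \eqref{nonzero moments of the orthogonality measures on the star}.

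Finally I would deduce the $r$-orthogonality of $\seq{P_n(x)}$ with respect to $(\hat\mu_1,\dots,\hat\mu_r)$. By Proposition \ref{dual sequences of a (r+1)-fold sym r-OPS and its components} and the displayed consequence \eqref{orthogonality functionals dual seq. r+1 fold sym r-OPS}, the linear functional $u_{j-1}$ acts on monomials $x^{(r+1)n+k}$ by $\modifiedStieltjesRogersPoly[r]{n}{j-1}{\alpha}$ if $k=j-1$ and by $0$ otherwise. Comparing with the moment computation just completed, the functional $f\mapsto\int_{\mathrm{St}^{(r+1)}} f\,\mathrm{d}\hat\mu_j$ has exactly the same moments as $u_{j-1}$, hence equals $u_{j-1}$ as a linear functional on $\mathbb{R}[x]$ (a functional is determined by its moments). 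Since $\seq{P_n(x)}$ is $r$-orthogonal with respect to $(u_0,\dots,u_{r-1})$, it is $r$-orthogonal with respect to $(\hat\mu_1,\dots,\hat\mu_r)$, i.e. \eqref{d-orthogonality conditions} holds with these measures; the non-degeneracy constant $N_n\neq 0$ is inherited from the dual-sequence normalisation $\Functional{u_{j-1}}{P_{(r+1)k+j-1}}$. This last paragraph is essentially bookkeeping once the moment identities are established, so the real work is concentrated in the existence step for the $\mu_j$ on $\R^+$.
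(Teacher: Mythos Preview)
Your proposal is correct and follows essentially the same approach as the paper: obtain the measures $\mu_j$ on $\R^+$ by citing existing literature (the paper invokes \cite[Cor.~4.6]{HypergeometricMOP+BCF}, equivalently \cite[Thm.~7]{AptKalVanIseGeneticSums} via Remark~\ref{genetic sums and modified S.-R.-poly}, exactly as you suggest), then perform the change of variables over the $r+1$ rays to establish \eqref{zero moments of the orthogonality measures on the star}--\eqref{nonzero moments of the orthogonality measures on the star}, and finally match moments with the dual-sequence functionals $u_{j-1}$ from Proposition~\ref{dual sequences of a (r+1)-fold sym r-OPS and its components} to conclude $r$-orthogonality. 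Your exploratory remarks on Hankel positivity and oscillation matrices are unnecessary detours (and the Hankel route as sketched would need more work), but your final chosen path is precisely the paper's.
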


As mentioned in the introduction, this result was implicitly obtained in \cite{AptKalVanIseGeneticSums}, while explicit statements were given in \cite[Eq.~2.15]{WalterNonsymmetric} and \cite[Thm.~1.1]{AptekarevKalyaginSaff3-termRecMOP}.
The rotational invariance up to a monomial factor of the measures $\hat{\mu}_2,\cdots,\hat{\mu}_r$ in \eqref{density of the orthogonality measures on the star} was given in \cite[Lemma~8.1]{DelvauxLopezHighOrder3TermRec}, assuming that the sequence $\seq[n\in\N]{\alpha_n}$ is bounded.

However, as far as I am aware, the first explicit proof of this result is given here.
Moreover, the proof appears as a natural consequence of Proposition \ref{dual sequences of a (r+1)-fold sym r-OPS and its components}, which gives us the combinatorial interpretation for the nonzero moments of the orthogonality measures in Theorem \ref{multiple orthogonality measures on the (r+1)-star th.*}.

\begin{proof}
Based on \cite[Cor.~4.6]{HypergeometricMOP+BCF}, there exist positive measures $\mu_1,\cdots,\mu_r$ on $\R^+$ satisfying \eqref{measures on R+ whose moments are modified r-S.R. poly}.
As a consequence of Remark \ref{genetic sums and modified S.-R.-poly}, this statement is equivalent to \cite[Thm.~7]{AptKalVanIseGeneticSums}.

We show that \eqref{measures on R+ whose moments are modified r-S.R. poly} implies that the measures $\hat{\mu}_1,\cdots,\hat{\mu}_r$ on $\mathrm{St}^{(r+1)}$ defined by \eqref{density of the orthogonality measures on the star} satisfy \eqref{zero moments of the orthogonality measures on the star}-\eqref{nonzero moments of the orthogonality measures on the star}, which means that $\hat{\mu}_1,\cdots,\hat{\mu}_r$ have the same moments as the first $r$ elements of the dual sequence of $\seq{P_n(x)}$. 
As a result, $\seq{P_n(x)}$ is $r$-orthogonal with respect to $\left(\hat{\mu}_1,\cdots,\hat{\mu}_r\right)$.
 
Recall that $\dis\mathrm{St}^{(r+1)}=\bigcup_{k=0}^{r}\Gamma_k$, where $\dis\Gamma_k=\left\{\omega^k\,x\,|\,x\geq 0\right\}$, with $\omega=\e^\frac{2\pi i}{r+1}$, for each $0\leq k\leq r$.

Using \eqref{density of the orthogonality measures on the star}, we have, for any $1\leq j\leq r$ and $m\in\N$,
\begin{equation}
\int_{\mathrm{St}^{(r+1)}}z^m\mathrm{d}\hat{\mu}_j(z)
=\frac{1}{r+1}\sum_{k=0}^{r}\int_{\Gamma_k}z^{m-j-1}\,\mathrm{d}\mu_j\left(z^{r+1}\right).
\end{equation}
Making the change of variable $z=\omega^k\,x^{\frac{1}{r+1}}\in\Gamma_k$, which implies that $x=z^{r+1}\in\R^+$, we derive
\begin{equation}
\label{integral on the star after change of variable}
\int_{\mathrm{St}^{(r+1)}}z^m\mathrm{d}\hat{\mu}_j(z)
=\frac{1}{r+1}\sum_{k=0}^{r}\omega^{(m-j-1)k}\int_{0}^{\infty}x^{\frac{m-j-1}{r+1}}\,\mathrm{d}\mu_{j+1}(x).
\end{equation}
If $m\not\equiv_{r+1}j-1$, then $\omega^{m-j-1}\neq 1$ and $\omega^{(m-j-1)(r+1)}=1$, so \eqref{zero moments of the orthogonality measures on the star} holds because
\begin{equation}
\sum_{k=0}^{r}\omega^{(m-j-1)k}=\frac{\omega^{(m-j-1)(r+1)}-1}{\omega^{m-j-1}-1}=0.
\end{equation}
Otherwise, we can write $m=(r+1)n+j-1$ with $n\in\N$. 
Then, $\omega^{m-j-1}=\omega^{(r+1)n}=1$ and $\dfrac{m-j-1}{r+1}=n$.
Hence, \eqref{integral on the star after change of variable} gives the first equality in \eqref{nonzero moments of the orthogonality measures on the star}.
\end{proof}

As a consequence of Proposition \ref{dual sequences of a (r+1)-fold sym r-OPS and its components}, the sequences $\seq{P_n^{[j]}(x)}$, with $0\leq j\leq r$, are $r$-orthogonal with respect to $\left(u_0^{[j]},\cdots,u_{r-1}^{[j]}\right)$ such that $\Functional{u_k^{[j]}}{x^n}=\generalisedStieltjesRogersPolyTypeJ[r]{n}{k}{j}{\boldsymbol{\alpha}}$ for any $n\in\N$ and $0\leq k\leq r-1$.
We are interested in writing the multiple orthogonality of these sequences with respect to positive measures on $\R^+,$ but $u_k^{[j]}$ cannot be induced by a positive measure on $\R^+$ for any $k\geq 1$, because $\Functional{u_k^{[j]}}{1}=0$.
Hence, we find other multiple orthogonality functionals for $\seq{P_n^{[j]}(x)}$, induced by positive measures on $\R^+$ when the coefficients $\alpha_n$ are all positive.
\begin{theorem}
\label{orthogonality functionals and measures for the components}
For $r\in\Z^+$ and a sequence $\seq[n\in\N]{\alpha_n}$ of nonzero elements of a field $\mathbb{K}$, 
let $\seq{P_n(x)}$ be the $(r+1)$-fold symmetric $r$-orthogonal polynomial sequence satisfying the recurrence relation \eqref{recurrence relation symmetric r-OP}, 
$\seq{P_n^{[j]}(x)}$, with $0\leq j\leq r$, be the components of the decomposition \eqref{r+1 fold decomposition of a symmetric r-OP} of $\seq{P_n(x)}$, 
and $v_1,\cdots,v_r$ be the linear functionals with moments
\begin{equation}
\label{modified r-S.R. poly as moments of the orthogonality functionals}
\Functional{v_i}{x^n}
=\modifiedStieltjesRogersPoly[r]{n}{i-1}{\boldsymbol{\alpha}}
\quad\text{for all }n\in\N\text{ and }1\leq i\leq r.
\end{equation}
Then, for any $0\leq j\leq r$, $\seq{P_n^{[j]}(x)}$ is $r$-orthogonal with respect to the system of linear functionals $\left(v_{j+1},\cdots,v_r,x\,v_1,\cdots,x\,v_j\right)$.

If $\alpha_n>0$ for all $n\in\N$, let $\mu_1,\cdots,\mu_r$ be positive measures on $\R^+$ satisfying \eqref{measures on R+ whose moments are modified r-S.R. poly}.
Then, for any $0\leq j\leq r$, $\seq{P_n^{[j]}(x)}$ is $r$-orthogonal with respect to the system of measures $\left(\mu_1^{[j]},\cdots,\mu_r^{[j]}\right)$ on $\R^+$ such that
\begin{equation}
\label{orthogonality measures components (r+1)-fold sym r-OPS}
\mathrm{d}\mu_i^{[j]}(x)=\mathrm{d}\mu_{j+i}(x)
\quad\text{for}\quad 1\leq i\leq r-j
\quad\text{and}\quad
\mathrm{d}\mu_{r-j+i}^{[j]}(x)=x\,\mathrm{d}\mu_i(x)
\quad\text{for}\quad 1\leq i\leq j.
\end{equation}
\end{theorem}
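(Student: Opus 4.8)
The plan is to reduce to the canonical dual orthogonality of each component and then perform a change of functional system. By Proposition~\ref{dual sequences of a (r+1)-fold sym r-OPS and its components}, the dual sequence $\seq[k\in\N]{u_k^{[j]}}$ of $\seq{P_n^{[j]}(x)}$ has moments $\Functional{u_k^{[j]}}{x^n}=\generalisedStieltjesRogersPolyTypeJ[r]{n}{k}{j}{\boldsymbol{\alpha}}$; in particular $\Functional{u_0^{[i-1]}}{x^n}=\modifiedStieltjesRogersPoly[r]{n}{i-1}{\boldsymbol{\alpha}}=\Functional{v_i}{x^n}$, hence $v_i=u_0^{[i-1]}$ for every $1\le i\le r$, and the system claimed in the theorem for the index $j$ is exactly $\big(u_0^{[j]},u_0^{[j+1]},\dots,u_0^{[r-1]},\,x\,u_0^{[0]},\dots,x\,u_0^{[j-1]}\big)$. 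Now $\seq{P_n^{[j]}(x)}$ is $r$-orthogonal (\cite[Thm.~5.2]{DouakandMaroniClassiquesDeDimensionDeux}, see also Corollary~\ref{decomposition of a (r+1)-fold sym r-OPS rec. coef.}), so it is $r$-orthogonal with respect to its canonical dual $\big(u_0^{[j]},\dots,u_{r-1}^{[j]}\big)$ and satisfies the order-$(r+1)$ recurrence \eqref{recurrence relation r+1 fold decomposition components}; consequently $\Functional{u_m^{[j]}}{x^kP_n^{[j]}}=0$ whenever $n>rk+m$, while $\Functional{u_{\ell-1}^{[j]}}{x^kP_{rk+\ell-1}^{[j]}}\ne0$. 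From this I obtain a \emph{criterion}: a system $(w_1,\dots,w_r)$ makes $\seq{P_n^{[j]}(x)}$ $r$-orthogonal provided that, for each $1\le\ell\le r$, $\Functional{w_\ell}{P_m^{[j]}}=0$ for all $m\ge\ell$ and $\Functional{w_\ell}{P_{\ell-1}^{[j]}}\ne0$; indeed, expanding $w_\ell=\sum_{m=0}^{\ell-1}\Functional{w_\ell}{P_m^{[j]}}u_m^{[j]}$ and pairing with $x^kP_n^{[j]}$ yields precisely the relations \eqref{d-orthogonality conditions}.

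The task thus becomes to expand each proposed functional in the basis $\seq[m\in\N]{u_m^{[j]}}$. For this I would record two transfer relations between the dual sequences of consecutive components: $u_k^{[j]}=u_k^{[j-1]}+\alpha_{(r+1)k+j}\,u_{k+1}^{[j-1]}$ for $1\le j\le r$, obtained by applying $u_k^{[j]}$ to the identity $P_{n+1}^{[j-1]}=P_{n+1}^{[j]}+\alpha_{(r+1)n+j}\,P_n^{[j]}$ from \eqref{relations between components of sym r-OPS}; and $x\,u_k^{[0]}=u_{k-1}^{[r]}+\alpha_{(r+1)k}\,u_k^{[r]}$ (with $u_{-1}^{[r]}:=0$), obtained by applying $x\,u_k^{[0]}$ to $P_n^{[r]}$ and using $x\,P_n^{[r]}=P_{n+1}^{[0]}+\alpha_{(r+1)n}\,P_n^{[0]}$ from \eqref{relation between consecutive components}. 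Iterating the first relation ``downwards'' in the component index gives $u_0^{[p]}=\sum_{m=0}^{p-j}c_m\,u_m^{[j]}$ with $c_0=1$ and $c_{p-j}\ne0$ (it is a product of the $\alpha$'s).

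Then the two families of functionals are treated separately. For $w_\ell=v_{j+\ell}=u_0^{[j+\ell-1]}$ with $1\le\ell\le r-j$, the expansion above gives $\Functional{w_\ell}{P_m^{[j]}}=c_m$, which is $0$ for $m\ge\ell$ and nonzero for $m=\ell-1$. For $w_\ell=x\,v_i=x\,u_0^{[i-1]}$ with $\ell=r-j+i$ and $1\le i\le j$, I would expand $u_0^{[i-1]}=\sum_{k=0}^{i-1}d_k\,u_k^{[0]}$ ($d_0=1$, $d_{i-1}\ne0$), apply the second transfer relation to pass to the $u_\bullet^{[r]}$-basis, where the expansion is supported on indices $0,\dots,i-1$ with a nonzero coefficient at index $i-1$, and then expand each $u_k^{[r]}$ back down to the $u_\bullet^{[j]}$-basis by iterating the first relation; the result is supported on indices $m\le(i-1)+(r-j)=\ell-1$ with a nonzero coefficient at $m=\ell-1$, so again $\Functional{w_\ell}{P_m^{[j]}}=0$ for $m\ge\ell$ and $\ne0$ for $m=\ell-1$. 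By the criterion this proves the first assertion. For the measure statement, when all $\alpha_n>0$ there exist positive measures $\mu_1,\dots,\mu_r$ on $\R^+$ with $\int_0^\infty x^n\,\dd\mu_i(x)=\modifiedStieltjesRogersPoly[r]{n}{i-1}{\boldsymbol{\alpha}}$ (\cite[Cor.~4.6]{HypergeometricMOP+BCF}, equivalently \cite[Thm.~7]{AptKalVanIseGeneticSums} via Remark~\ref{genetic sums and modified S.-R.-poly}); then $\mu_i$ represents $v_i$ and the positive measure $x\,\dd\mu_i(x)$ on $\R^+$ represents $x\,v_i$, so the system $(\mu_1^{[j]},\dots,\mu_r^{[j]})$ defined in \eqref{orthogonality measures components (r+1)-fold sym r-OPS} represents $(v_{j+1},\dots,v_r,x\,v_1,\dots,x\,v_j)$ and the second assertion follows from the first.

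The main obstacle should be the bookkeeping in the two middle steps: deriving the second transfer relation correctly (multiplication by $x$ is not invertible on polynomials, so one argues through the pairing with $P_n^{[r]}$ rather than by ``dividing by $x$''), and then tracking, through the repeated downward iterations and the passage $u_\bullet^{[r]}\to u_\bullet^{[j]}$, exactly which $u_m^{[j]}$-coefficients vanish and that the top surviving one is a product of nonzero $\alpha_n$'s --- which is where the hypothesis $\alpha_n\ne0$ is used, both for $\Functional{w_\ell}{P_{\ell-1}^{[j]}}\ne0$ and for the regularity $\Functional{u_{\ell-1}^{[j]}}{x^kP_{rk+\ell-1}^{[j]}}\ne0$ invoked in the criterion.
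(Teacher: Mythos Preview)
Your argument is correct, but it takes a genuinely different route from the paper's. The paper never works with the dual sequences $u_k^{[j]}$ of the components, nor with transfer relations between them. Instead it lifts everything back to the full $(r+1)$-fold symmetric sequence: from Proposition~\ref{dual sequences of a (r+1)-fold sym r-OPS and its components} one has $\Functional{v_i}{q(x)}=\Functional{u_{i-1}}{x^{\,i-1}q(x^{r+1})}$ for every polynomial $q$, where $\seq[k\in\N]{u_k}$ is the dual sequence of $\seq{P_n(x)}$; plugging in $q(x)=x^kP_n^{[j]}(x)$ and using $x^jP_n^{[j]}(x^{r+1})=P_{(r+1)n+j}(x)$ converts each pairing $\Functional{v_{j+i}}{x^kP_n^{[j]}}$ (resp.\ $\Functional{x\,v_i}{x^kP_n^{[j]}}$) into $\Functional{u_{j+i-1}}{x^{(r+1)k+i-1}P_{(r+1)n+j}}$ (resp.\ $\Functional{u_{i-1}}{x^{(r+1)k+r-j+i}P_{(r+1)n+j}}$), and then one reads off the required vanishing/nonvanishing directly from the $r$-orthogonality of $\seq{P_n(x)}$ with respect to $(u_0,\dots,u_{r-1})$. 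This is shorter and avoids all the bookkeeping you flag as the main obstacle.

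Your approach, by contrast, is entirely intrinsic to the components: the two transfer relations you derive are exactly the dual-side manifestations of the bidiagonal matrices $\mathrm{L}_j$ and $\mathrm{U}$ from \eqref{relation between consecutive components}, and your expansion $w_\ell=\sum_{m\le\ell-1}c_m u_m^{[j]}$ (with $c_{\ell-1}$ a product of nonzero $\alpha$'s) makes transparent why the system $(v_{j+1},\dots,v_r,x\,v_1,\dots,x\,v_j)$ is a valid replacement for the canonical dual system. So your method buys an explicit triangular change of basis between the two functional systems, at the cost of tracking several iterated bidiagonal expansions; the paper's method buys brevity by exploiting the substitution $x\mapsto x^{r+1}$ to reduce to a single known orthogonality. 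The final paragraph on measures is the same in both proofs.
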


\begin{proof}
Let $\seq[k\in\N]{u_k}$ be the dual sequence of $\seq{P_n(x)}$.
Recalling Proposition \ref{dual sequences of a (r+1)-fold sym r-OPS and its components}, \eqref{modified r-S.R. poly as moments of the orthogonality functionals} is equivalent to
\begin{equation}
\label{orthogonality functionals of the components and the dual seq of P_n}
\Functional{v_i}{x^n}
=\Functional{u_{i-1}}{x^{(r+1)n+i-1}}
\quad\text{for all }n\in\N\text{ and }1\leq i\leq r.
\end{equation}
where $\seq[k\in\N]{u_k}$ is the dual sequence of $\seq{P_n(x)}$.
Moreover, because $\seq{P_n(x)}$ is $r$-orthogonal with respect to $\left(u_0,\cdots,u_{r-1}\right)$, we have
\begin{equation}
\label{r-orthogonality conditions dual seq. of P_n}
\Functional{u_{i-1}}{x^k\,P_n(x)}
=\begin{cases}
N_n\neq 0 &\text{ if } n=rk+i-1, \\
  \hfil 0 &\text{ if } n\geq rk+i,
\end{cases}
\quad\text{for all }1\leq i\leq r.
\end{equation}

Fix $0\leq j\leq r$.
For $1\leq i\leq r-j$, we can use \eqref{orthogonality functionals of the components and the dual seq of P_n} and the $(r+1)$-fold decomposition \eqref{r+1 fold decomposition of a symmetric r-OP} of $\seq{P_n(x)}$ to find that
\begin{equation}
\Functional{v_{j+i}}{x^k\,P_n^{[j]}(x)}
=\Functional{u_{j+i-1}}{x^{(r+1)k+j+i-1}\,P_n^{[j]}\left(x^{r+1}\right)}
=\Functional{u_{j+i-1}}{x^{(r+1)k+i-1}\,P_{(r+1)n+j}(x)}.
\end{equation}
If $n=rk+i-1$, then $(r+1)n+j=r\left((r+1)k+i-1\right)+j+i-1$, so, using \eqref{r-orthogonality conditions dual seq. of P_n}, we have
\begin{equation}
\label{r-orthogonality conditions components 1}
\Functional{v_{j+i}}{x^k\,P_n^{[j]}(x)}
=\Functional{u_{j+i-1}}{x^{(r+1)k+i-1}\,P_{(r+1)n+j}(x)}
=\begin{cases}
N_n\neq 0 & \text{if } n=rk+i-1, \\
  \hfil 0 & \text{if } n\geq rk+i.
\end{cases}
\end{equation}

For $1\leq i\leq j$, we can again use \eqref{orthogonality functionals of the components and the dual seq of P_n} and the $(r+1)$-fold decomposition \eqref{r+1 fold decomposition of a symmetric r-OP} of $\seq{P_n(x)}$ to find that
\begin{equation}
\Functional{x\,v_i}{x^k\,P_n^{[j]}(x)}
=\Functional{u_{i-1}}{x^{(r+1)(k+1)+i-1}\,P_n^{[j]}\left(x^{r+1}\right)}
=\Functional{u_{i-1}}{x^{(r+1)k+r-j+i}\,P_{(r+1)n+j}(x)}.
\end{equation}
If $n=rk+r-j+i-1$, then $(r+1)n+j=r\big((r+1)k+r-j+i\big)+i-1$, so, using \eqref{r-orthogonality conditions dual seq. of P_n}, we have
\begin{equation}
\label{r-orthogonality conditions components 2}
\Functional{x\,v_i}{x^k\,P_n^{[j]}(x)}
=\Functional{u_{i-1}}{x^{(r+1)k+r-i-j}\,P_{(r+1)n+j}(x)}
=\begin{cases}
N_n\neq 0 & \text{if } n=rk+r-j+i-1, \\
  \hfil 0 & \text{if } n\geq rk+r-j+i.
\end{cases}
\end{equation}

Combining \eqref{r-orthogonality conditions components 1} and \eqref{r-orthogonality conditions components 2}, we conclude that $\seq{P_n^{[j]}(x)}$ is $r$-orthogonal with respect to the system of linear functionals $\left(v_{j+1},\cdots,v_r,x\,v_1,\cdots,x\,v_j\right)$.

If $\alpha_n>0$ for all $n\in\N$, there exist positive measures $\mu_1,\cdots,\mu_r$ on $\R^+$ satisfying  \eqref{measures on R+ whose moments are modified r-S.R. poly}, i.e., with the same moments as $v_1,\cdots,v_r$.
Therefore, $\seq{P_n^{[j]}(x)}$ is $r$-orthogonal with respect to the system $\left(\mu_1^{[j]},\cdots,\mu_r^{[j]}\right)$ determined by \eqref{orthogonality measures components (r+1)-fold sym r-OPS}.
\end{proof}


\subsection{An explicit example involving hypergeometric Appell polynomials}
\label{explicit example}
We finish this paper giving an explicit example of a $(r+1)$-fold symmetric $r$-orthogonal polynomial sequence $\seq{P_n(x)}$ satisfying the recurrence relation \eqref{recurrence relation symmetric r-OP} for a specific sequence of coefficients $\seq{\alpha_n}$.
These polynomials can be expressed via terminating hypergeometric series and form Appell sequences, i.e.,
\begin{equation}
\label{Appell property}
P_{n+1}^{\,\prime}(x)=(n+1)P_n(x)
\quad\text{for all }n\in\N.
\end{equation}
The Hermite polynomials form the only Appell orthogonal polynomial sequence, up to a linear transformation (see \cite[Thm.~IV]{ShohatClassicalOPandAppell}).
An investigation of the Appell $r$-orthogonal polynomials is presented in \cite{Douakd-OPAppell}.
The polynomial sequences described in Theorem \ref{particular case th.} are the only Appell $(r+1)$-fold symmetric $r$-orthogonal polynomial sequences, up to a scaling change of variable.

The components of the $(r+1)$-fold decomposition of the polynomials in Theorem \ref{particular case th.} are particular cases of the multiple orthogonal polynomials with respect to Meijer G-functions $G^{\,r,0}_{0,r}$ on the positive real line introduced in \cite{KuijlaarsZhang14} to investigate singular values of products of Ginibre random matrices.
Therefore, we can show that the corresponding $(r+1)$-fold symmetric polynomials are multiple orthogonal with respect to measures on the star-like set $\mathrm{St}^{(r+1)}$, whose densities can be expressed via the same Meijer G-functions.

Before stating Theorem \ref{particular case th.}, we recall the definitions of the hypergeometric series and the Meijer G-function. 
For $r,s\in\N$, the (generalised) hypergeometric series (see \cite{AndrewsAskeyRoySpecialFunctions,LukeSpecialFunctionsVolI,DLMF}) is defined by
\begin{equation}
\label{hypergeometric series}
\Hypergeometric[z]{s}{r}{b_1,\cdots,b_s}{a_1,\cdots,a_r}
=\sum_{n=0}^{\infty}\frac{\pochhammer{b_1}\cdots\pochhammer{b_s}} {\pochhammer{a_1}\cdots\pochhammer{a_r}}\,\frac{z^n}{n!},
\end{equation}
where the Pochhammer symbol $\pochhammer{c}$ is defined by $\pochhammer[0]{c}=1$ and $\pochhammer[n]{c}:=c(c+1)\cdots(c+n-1)$ for $n\geq 1$.
The hypergeometric series appearing here are terminating, thus they converge and define polynomials.

For $r\in\Z^+$, the Meijer G-function $G^{\,r,0}_{0,r}$ (see \cite{LukeSpecialFunctionsVolI, DLMF} for more details and for information on the general Meijer G-function $G^{\,m,n}_{s,r}$) is defined by the Mellin-Barnes type integral
\begin{equation}
\label{Meijer G-function definition}
\MeijerG{r,0}{0,r}{-}{a_1,\cdots,a_r}
=\frac{1}{2\pi i}\int_{c-i\infty}^{c+i\infty}\Gamma\left(a_1+u\right)\cdots\Gamma\left(a_r+u\right)x^{-u}\mathrm{d}u,
\quad c>-\min\limits_{1\leq i\leq r}\{\Real(a_i)\}.
\end{equation}
As usual, $\Gamma(z)$ denotes Euler's gamma function.
A fundamental property of the Meijer G-function $G^{\,r,0}_{0,r}$ is that its Mellin transform, when it exists, is a product of gamma function values
(see \cite[Eq.~2.24.2.1]{PrudnikovEtAlVol3}):
\begin{equation}
\label{Mellin transform of a Meijer G-function}
\int\limits_{0}^{\infty}\MeijerG{r,0}{0,r}{-}{a_1,\cdots,a_r}x^{z-1}\,\mathrm{d}x
=\Gamma\left(a_1+z\right)\cdots\Gamma\left(a_r+z\right),
\quad\Real(z)>-\min\limits_{1\leq i\leq r}\{\Real\left(a_i\right)\}.
\end{equation}
Another useful property of the Meijer G-function $G^{\,r,0}_{0,r}$, which can be easily deduced from \eqref{Meijer G-function definition} and can be found in \cite[Eq.~5.3.8]{BatemanProjectHTFunctionsVol1}, is
\begin{equation}
\label{multiplication by x of a Meijer G-function}
x\,\MeijerG{r,0}{0,r}{-}{a_1,\cdots,a_r}=\MeijerG{r,0}{0,r}{-}{a_1+1,\cdots,a_r+1}.
\end{equation}

\begin{theorem}
\label{particular case th.}
For $r\in\Z^+$, let $\seq{P_n(x)}$ be the $(r+1)$-fold symmetric polynomial sequence satisfying the recurrence relation 
\begin{equation}
\label{recurrence relation symmetric r-OP hypergeometric case}
P_{n+r+1}(x)=x\,P_{n+r}(x)-\frac{\pochhammer[r]{n+1}}{(r+1)^r}\,P_n(x)
\quad\text{for all }n\in\N,
\end{equation}
with initial conditions $P_j(x)=x^j$ for all $0\leq j\leq r$.
Then:
\begin{enumerate}[label=(\alph*),leftmargin=*]
\item 
$\seq{P_n(x)}$ is $r$-orthogonal with respect to the system of measures $\left(\hat{\mu}_1,\cdots,\hat{\mu}_r\right)$ on the star-like set $\mathrm{St}^{(r+1)}$ with densities
\begin{equation}
\label{density of the orthogonality measures on the star Appell case}
\mathrm{d}\hat{\mu}_j(z)
=\MeijerG[z^{r+1}]{r,0}{0,r}{-}{a_1^{[j-1]},\cdots,a_r^{[j-1]}}\frac{\mathrm{d}z}{z^{r+j}}
\quad\text{for all }1\leq j\leq r\text{ and }z\in\mathrm{St}^{(r+1)},
\end{equation}
with
\begin{equation}
\label{parameters a_i^j components}
a_i^{[j]}=\frac{i+j}{r+1} \text{ if } 1\leq i\leq r-j
\quad\text{and}\quad
a_i^{[j]}=\frac{i+j+1}{r+1} \text{ if } r-j+1\leq i\leq r.
\end{equation}

\item 
$\seq{P_n(x)}$ can be represented via terminating hypergeometric series by
\begin{equation}
\label{1Fr sym MOP}
P_{(r+1)n+j}(x) 
=\frac{(-1)^n\pochhammer[(r+1)n]{j+1}}{n!\,(r+1)^{(r+1)n}}\,x^j\,
\Hypergeometric[x^{r+1}]{1}{r}{-n}{a_1^{[j]},\cdots,a_r^{[j]}}
\quad\text{for all }n\in\N\text{ and }0\leq j\leq r.
\end{equation}

\item 
$\seq{P_n(x)}$ is an Appell sequence, i.e., $\seq{P_n(x)}$ satisfies \eqref{Appell property}, and any Appell $(r+1)$-fold symmetric $r$-orthogonal polynomial sequence is of the form $\dis\seq{c^{-n}P_n(cx)}$ for some $c\neq 0$.
\end{enumerate}
\end{theorem}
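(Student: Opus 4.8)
The plan is to prove part~(b) first and derive (c) and (a) from it. For (b), note that the recurrence \eqref{recurrence relation symmetric r-OP hypergeometric case} together with the initial conditions $P_j(x)=x^j$ ($0\le j\le r$) pins down $\seq{P_n(x)}$ uniquely, so by Theorem~\ref{decomposition of a (r+1)-fold sym r-OPS main th.}---concretely, by the coupled relations \eqref{relations between components of sym r-OPS} with $\alpha_{(r+1)n+j}=\pochhammer[r]{(r+1)n+j+1}/(r+1)^r$---it suffices to check that the functions
\begin{equation*}
\widehat P_n^{[j]}(y):=(-1)^n\Bigl(\prod_{i=1}^{r}\pochhammer[n]{a_i^{[j]}}\Bigr)\,\Hypergeometric[y]{1}{r}{-n}{a_1^{[j]},\cdots,a_r^{[j]}},\qquad 0\le j\le r,
\end{equation*}
with $a_i^{[j]}$ as in \eqref{parameters a_i^j components}, satisfy $\widehat P_{n+1}^{[0]}(y)=y\,\widehat P_n^{[r]}(y)-\alpha_{(r+1)n}\widehat P_n^{[0]}(y)$ and $\widehat P_{n+1}^{[j]}(y)=\widehat P_{n+1}^{[j-1]}(y)-\alpha_{(r+1)n+j}\widehat P_n^{[j]}(y)$ with $\widehat P_0^{[j]}=1$; the prefactor is forced by monicity and reproduces the constant in \eqref{1Fr sym MOP} once we set $P_{(r+1)n+j}(x)=x^j\widehat P_n^{[j]}(x^{r+1})$.

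I would verify each of these two relations by comparing the coefficient of $y^k$ on both sides, which turns it into a rational-function identity in $n$ and $k$. Two structural facts make them collapse: first, as multisets $\{a_i^{[j-1]}\}$ arises from $\{a_i^{[j]}\}$ by a single contiguous shift (replacing $\tfrac{r+j+1}{r+1}$ by $\tfrac{j}{r+1}$), so that $\prod_i\pochhammer[k]{a_i^{[j-1]}}\big/\prod_i\pochhammer[k]{a_i^{[j]}}=\tfrac{j}{j+(r+1)k}$; second, $\{a_i^{[r]}\}=\{a_i^{[0]}+1\}$ and, because $(r+1)^r\prod_i(a_i^{[j]}+n)$ is a product of $r$ consecutive integers with exactly one factor deleted, one has $\prod_i(a_i^{[0]}+n)=\alpha_{(r+1)n}$, $\prod_i a_i^{[0]}=\alpha_0$, and $\alpha_{(r+1)n+j}=\bigl(\prod_i(a_i^{[j]}+n)\bigr)\tfrac{(r+1)(n+1)}{(r+1)(n+1)+j}$. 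Substituting these, the coefficient-of-$y^k$ identity for the second relation reduces to $\tfrac{j+(r+1)k}{j+(r+1)(n+1)}+\tfrac{(r+1)(n+1)}{j+(r+1)(n+1)}\cdot\tfrac{n-k+1}{n+1}=1$ and that for the first relation to an equally trivial one, so (b) follows.

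Part~(c) then comes from (b): differentiating \eqref{1Fr sym MOP} term by term via $\frac{\dd}{\dd t}\,{}_1F_r(-n;\mathbf b;t)=\frac{-n}{\prod_i b_i}\,{}_1F_r(-(n-1);\mathbf b+1;t)$ and using the same parameter identities (for $1\le j\le r$ one gets $j\,{}_1F_r(-n;\mathbf a^{[j]};t)+(r+1)t\,\frac{\dd}{\dd t}\,{}_1F_r(-n;\mathbf a^{[j]};t)=j\,{}_1F_r(-n;\mathbf a^{[j-1]};t)$; for $j=0$ one uses $\mathbf a^{[r]}=\mathbf a^{[0]}+1$) yields $P_{(r+1)n+j}'(x)=\bigl((r+1)n+j\bigr)P_{(r+1)n+j-1}(x)$ after matching the Pochhammer prefactors, which is \eqref{Appell property}. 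For uniqueness, any Appell $(r+1)$-fold symmetric $r$-orthogonal sequence satisfies \eqref{recurrence relation for a r+1-fold symmetric r-OPS} with some $\alpha_n\ne 0$; differentiating that relation, using $P_m'=mP_{m-1}$, and eliminating $x$ by \eqref{recurrence relation for a r+1-fold symmetric r-OPS} itself gives $n\alpha_n=(n+r)\alpha_{n-1}$, hence $\alpha_n=\alpha_0\pochhammer{r+1}/n!$; since $P_n(x)\mapsto c^{-n}P_n(cx)$ sends $\alpha_n\mapsto c^{-(r+1)}\alpha_n$, a suitable $c$ normalises $\alpha_0$ to the value in \eqref{recurrence relation symmetric r-OP hypergeometric case}.

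For part~(a), (b) identifies $\seq{P_n^{[j]}(x)}$ (up to a constant) with $\Hypergeometric[x]{1}{r}{-n}{a_1^{[j]},\cdots,a_r^{[j]}}$; expanding the integrand term by term and applying \eqref{Mellin transform of a Meijer G-function}, the pairing $\int_0^\infty x^k P_n^{[j]}(x)\,x^{s}\MeijerG{r,0}{0,r}{-}{\nu_1,\cdots,\nu_r}\dd x$ becomes a multiple of a terminating ${}_{r+1}F_r$ at $1$ which, for $\nu_i=a_i^{[j]}-s-1$, vanishes up to the $r$-orthogonality degree by a Karlsson--Minton ($r$-fold Vandermonde) summation; so the components are the multiple orthogonal polynomials of \cite{KuijlaarsZhang14} for weights $G^{\,r,0}_{0,r}$. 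Matching this against Theorem~\ref{orthogonality functionals and measures for the components} identifies the measures $\mu_1,\dots,\mu_r$ of Theorem~\ref{multiple orthogonality measures on the (r+1)-star th.*}---those on $\R^+$ with moments $\modifiedStieltjesRogersPoly{n}{i-1}{\boldsymbol{\alpha}}$ for this $\boldsymbol\alpha$---as constant multiples of $x^{c}\MeijerG{r,0}{0,r}{-}{a_1^{[i-1]},\cdots,a_r^{[i-1]}}\dd x$, the equality of moments being \eqref{Mellin transform of a Meijer G-function} again; transporting to $\mathrm{St}^{(r+1)}$ via \eqref{density of the orthogonality measures on the star} and simplifying the monomial prefactors with \eqref{multiplication by x of a Meijer G-function} produces \eqref{density of the orthogonality measures on the star Appell case}--\eqref{parameters a_i^j components}. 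The conceptual scaffolding is entirely Theorems~\ref{decomposition of a (r+1)-fold sym r-OPS main th.}, \ref{orthogonality functionals and measures for the components} and \ref{multiple orthogonality measures on the (r+1)-star th.*}; the hard part is purely the hypergeometric bookkeeping---tuning the $a_i^{[j]}$ so that the coupled recurrences hold coefficient-wise and the relevant ${}_{r+1}F_r(1)$ vanishes exactly up to (and not past) the orthogonality degree---and keeping the monomial and Pochhammer normalisations consistent with the explicit constant in \eqref{1Fr sym MOP} and with the $z^{1-j}$, $z^{-r-j}$ factors relating \eqref{density of the orthogonality measures on the star} and \eqref{density of the orthogonality measures on the star Appell case}.
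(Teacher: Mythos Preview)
Your proposal is correct but follows a genuinely different route from the paper's proof. The paper argues in the order (a)$\to$(b)$\to$(c) and leans on two external results from \cite{HypergeometricMOP+BCF}: it invokes \cite[Cor.~5.7]{HypergeometricMOP+BCF} to obtain the closed form $\modifiedStieltjesRogersPoly[r]{n}{j}{\alpha}=\prod_i\pochhammer{a_i^{[j]}}$ for this particular $\boldsymbol\alpha$, feeds that into Theorems~\ref{multiple orthogonality measures on the (r+1)-star th.*} and~\ref{orthogonality functionals and measures for the components} to get (a), and then quotes \cite[Thm.~8.1]{HypergeometricMOP+BCF} to read off the ${}_1F_r$ representation (b); for the uniqueness half of (c) it simply cites \cite[Eq.~2.11]{Douakd-OPAppell}. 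Your argument is more self-contained: you prove (b) first by verifying the coupled bidiagonal relations \eqref{relations between components of sym r-OPS} coefficient-wise (the parameter-shift identities you list are exactly what makes this collapse), derive both the Appell property and its uniqueness by differentiating the recurrence directly, and for (a) establish orthogonality of the components against the Meijer-$G$ weights via a Karlsson--Minton summation. The trade-off is that the paper's proof is very short because the hypergeometric work is outsourced, whereas yours is longer but avoids all three external citations.

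One point in your part~(a) should be tightened. The sentence ``matching this against Theorem~\ref{orthogonality functionals and measures for the components} identifies the measures $\mu_1,\dots,\mu_r$'' does not stand on its own, because two different systems of measures can give $r$-orthogonality for the same sequence. What actually closes the loop is the $k=0$ instance of your Karlsson--Minton computation: it shows that the (normalised) first Meijer-$G$ weight annihilates $P_n^{[j]}$ for every $n\ge 1$, hence coincides with the dual functional $u_0^{[j]}$; by Proposition~\ref{dual sequences of a (r+1)-fold sym r-OPS and its components} its moments are $\modifiedStieltjesRogersPoly[r]{n}{j}{\alpha}$, and by the Mellin transform \eqref{Mellin transform of a Meijer G-function} they are $\prod_i\pochhammer{a_i^{[j]}}$. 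That equality is what lets you invoke Theorem~\ref{multiple orthogonality measures on the (r+1)-star th.*} with the explicit $\mu_i$, and it is precisely the content the paper imports from \cite[Cor.~5.7]{HypergeometricMOP+BCF}.
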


For $r=1$, $\seq{P_n(x)}$ are the classical Hermite orthogonal polynomials, 
\eqref{density of the orthogonality measures on the star Appell case} reduces to $\mathrm{d}\hat{\mu}_1(z)=\e^{-z^2}\mathrm{d}z$ for all $z\in\R$,
and \eqref{1Fr sym MOP} gives the quadratic decomposition of the Hermite polynomials in Laguerre polynomials.
For $r=2$, the sequence $\seq{P_n(x)}$ and the components of its cubic decomposition were introduced in \cite{DouakandMaroniClassiquesDeDimensionDeux} and further characterised in \cite[\S 5]{BenCheikhDouak2OPBatemanFunction} and \cite[\S 3.1]{AnaWalter3FoldSym}.

\begin{proof}
For $0\leq j\leq r$, let $a_1^{[j]},\cdots,a_r^{[j]}$ be given by \eqref{parameters a_i^j components} and $a_{r+1}^{[j]}=1$. Moreover, let:
\begin{itemize}
\item 
$a_i^{[(r+1)n+j]}=a_i^{[j]}+n$ for any $1\leq i\leq r+1$, $0\leq j\leq r$, and $n\in\N$,
\hfill\refstepcounter{equation}\textup{(\theequation)}\label{?}

\item 
$\hat{a}_i^{[k]}=\dfrac{i}{r+1}+\ceil{\dfrac{k+1-i}{r+1}}$ for any $1\leq i\leq r+1$ and $k\in\N$.
\hfill\refstepcounter{equation}\textup{(\theequation)}\label{??}
\end{itemize}

Then, for any $1\leq i\leq r+1$ and $0\leq j\leq r$,
\begin{equation}
\hat{a}_i^{[j]}
=\frac{i}{r+1}+\ceil{\dfrac{k+1-i}{r+1}}=
\begin{cases}
\dfrac{i+r+1}{r+1}=a_{i+r-j}^{[j]} & \text{for } 1\leq i\leq j, \vspace*{0,1 cm}\\
\hfil \dfrac{i}{r+1}=a_{i-j}^{[j]} & \text{for } j+1\leq i\leq r+1.
\end{cases}
\end{equation}
Moreover, it is clear that 
$\hat{a}_i^{[(r+1)n+j]}=\hat{a}_i^{[j]}+n$ for any $1\leq i\leq r+1$, $0\leq j\leq r$, and $n\in\N$.
Hence,
\begin{equation}
\left\{a_1^{[j]},\cdots,a_r^{[j]}\right\}=\left\{\hat{a}_1^{[j]},\cdots,\hat{a}_r^{[j]}\right\}
\quad\text{and}\quad
a_{r+1}^{[j]}=\hat{a}_{r+1}^{[j]}
\quad\text{for any }j\in\N.
\end{equation}

Therefore, using \cite[Cor.~5.7]{HypergeometricMOP+BCF},
\begin{equation}
\prod_{i=1}^{r}\pochhammer{a_i^{[j]}}
=\prod_{i=1}^{r}\pochhammer{\hat{a}_i^{[j]}}
=\modifiedStieltjesRogersPoly[r]{n}{j}{\alpha}
\quad\text{for all }n\in\N\text{ and }0\leq j\leq r,
\end{equation}
with
\begin{equation}
\label{alphas Appell case}
\alpha_k
=\prod_{\substack{i=1\\i\not\equiv_{r+1}k}}^{r+1}\hat{a}_i^{[k]}
=\prod_{i=1}^{r}\frac{k+i}{r+1}
=\frac{\pochhammer[r]{k+1}}{(r+1)^r}>0
\quad\text{for all }k\in\N.
\end{equation}
Observe that these are the coefficients of the recurrence relation \eqref{recurrence relation symmetric r-OP hypergeometric case} satisfied by $\seq{P_n(x)}$.

Let $\left(\mu_1,\cdots,\mu_r\right)$ be the system of measures on the positive real line with densities
\begin{equation}
\label{density of the orthogonality measures on R+ Appell case}
\mathrm{d}\mu_j(x)
=\frac{1}{\prod\limits_{i=1}^{r}\Gamma\left(a_i^{[j-1]}\right)}\,
\MeijerG{r,0}{0,r}{-}{a_1^{[j-1]},\cdots,a_r^{[j-1]}}\,\frac{\mathrm{d}x}{x}
\quad\text{for all }1\leq j\leq r\text{ and }x\in\R^+.
\end{equation}
Using \eqref{Mellin transform of a Meijer G-function}, we have, for any $n\in\N$ and $0\leq j\leq r$,
\begin{equation}
\int_{0}^{\infty}x^n\mathrm{d}\mu_j(x)
=\frac{\prod\limits_{i=1}^{r}\Gamma\left(a_i^{[j-1]}+n\right)}{\prod\limits_{i=1}^{r}\Gamma\left(a_i^{[j-1]}\right)}
=\prod_{i=1}^{r}\pochhammer{a_i^{[j-1]}}
=\modifiedStieltjesRogersPoly[r]{n}{j-1}{\alpha}
\text{ for all }n\in\N\text{ and }1\leq j\leq r.
\end{equation}
As a result, using Theorem \ref{multiple orthogonality measures on the (r+1)-star th.*}, we find that $\seq{P_n(x)}$ is $r$-orthogonal with respect to the system of measures $\left(\hat{\mu}_1,\cdots,\hat{\mu}_r\right)$ on $\mathrm{St}^{(r+1)}$, whose densities are given by 
\begin{equation}
\mathrm{d}\hat{\mu}_j(z)=z^{1-j}\,\mathrm{d}\mu_j\left(z^{r+1}\right)
\quad\text{for all }1\leq j\leq r\text{ and }z\in\mathrm{St}^{(r+1)},
\end{equation} 
with $\left(\mu_1,\cdots,\mu_r\right)$ determined by \eqref{density of the orthogonality measures on R+ Appell case}. 
This is equivalent to \eqref{density of the orthogonality measures on the star Appell case}, so we proved part (a) of the theorem.

Furthermore, using Theorem \ref{orthogonality functionals and measures for the components}, the components $\seq{P_n^{[j]}(x)}$, with $0\leq j\leq r$, of the decomposition \eqref{r+1 fold decomposition of a symmetric r-OP} of $\seq{P_n(x)}$ are $r$-orthogonal with respect to the systems of measures $\left(\mu_1^{[j]},\cdots,\mu_r^{[j]}\right)$ on $\R^+$ such that
\begin{equation}
\mathrm{d}\mu_i^{[j]}(x)=\mathrm{d}\mu_{j+i}(x)
=\MeijerG{r,0}{0,r}{-}{a_1^{[j+i-1]},\cdots,a_r^{[j+i-1]}}\,\frac{\mathrm{d}x}{x}
\quad\text{for}\quad 1\leq i\leq r-j,
\end{equation}
and, using \eqref{multiplication by x of a Meijer G-function} and $\hat{a}_k^{[i-1]}+1=\hat{a}_k^{[r+i]}$ for any $1\leq i,k\leq r$,
\begin{equation}
\mathrm{d}\mu_{r-j+i}^{[j]}(x)=x\,\mathrm{d}\mu_i(x)
=\MeijerG{r,0}{0,r}{-}{a_1^{[r+i]},\cdots,a_r^{[r+i]}}\,\frac{\mathrm{d}x}{x}
\quad\text{for}\quad 1\leq i\leq j.
\end{equation}

Therefore, using \cite[Thm.~8.1]{HypergeometricMOP+BCF}, we find that 
\begin{equation}
P_n^{[j]}(x) 
=(-1)^n\prod_{i=1}^{r}\pochhammer{a_i^{[j]}}\,
\Hypergeometric{1}{r}{-n}{a_1^{[j]},\cdots,a_r^{[j]}}
\quad\text{for all }n\in\N\text{ and }0\leq j\leq r.
\end{equation}
Moreover, for any $n\in\N$ and $0\leq j\leq r$,
\begin{equation}
\prod_{i=1}^{r}\pochhammer{a_i^{[j]}}
=\frac{1}{n!}\prod_{i=1}^{r+1}\pochhammer{\frac{i+j}{r+1}}
=\frac{1}{n!}\prod_{i=1}^{r+1}\prod_{k=0}^{n-1}\frac{(r+1)k+i+j}{r+1}
=\frac{\pochhammer[(r+1)n]{j+1}}{n!(r+1)^{(r+1)n}}.
\end{equation}
As a result, using \eqref{r+1 fold decomposition of a symmetric r-OP}, we derive \eqref{1Fr sym MOP}.
Hence, we proved part (b) of the theorem.

Applying the definition of the hypergeometric series \eqref{hypergeometric series} to \eqref{1Fr sym MOP}, we obtain
\begin{equation}
\label{explicit expression Appell sym MOP}
P_{(r+1)n+j}(x)
=\sum_{k=0}^{n}c_{n,k}^{[j]}\,x^{(r+1)k+j}
\quad\text{for all }n\in\N\text{ and }0\leq j\leq r,
\end{equation}
with
\begin{equation}
c_{n,k}^{[j]}
=\frac{(-1)^n\pochhammer[(r+1)n]{j+1}}{n!\,(r+1)^{(r+1)n}}\, \frac{\pochhammer[k]{-n}\,(r+1)^{(r+1)k}}{\pochhammer[(r+1)k]{j+1}}
=\frac{(-1)^{n-k}\,\pochhammer[(r+1)(n-k)]{(r+1)k+j+1}}{(r+1)^{(r+1)(n-k)}\,(n-k)!}.
\end{equation}
Observe that, for any $n\in\N$ and $0\leq k\leq n$:
\begin{itemize}
\item 
$\big((r+1)k+j\big)c_{n,k}^{[j]}=\big((r+1)n+j\big)c_{n,k}^{[j-1]}$
for $1\leq j\leq r$,

\item 
$(r+1)(k+1)c_{n+1,k+1}^{[0]}=(r+1)(n+1)c_{n,k}^{[r]}$.
\end{itemize}

As a result, it follows immediately from differentiating \eqref{explicit expression Appell sym MOP} that $P_{n+1}^{\,\prime}(x)=(n+1)P_n(x)$ for all $n\in\N$,
i.e., $\seq{P_n(x)}$ is an Appell sequence. 
Based on \cite[Eq.~2.11]{Douakd-OPAppell}, a $(r+1)$-fold symmetric $r$-orthogonal polynomial sequence $\seq{Q_n(x)}$ is Appell if and only if there exists $\alpha\neq 0$ such that $\seq{Q_n(x)}$ satisfies the recurrence relation 
\begin{equation}
\label{recurrence relation symmetric r-OP Appell}
Q_{n+r+1}(x)=x\,Q_{n+r}(x)-\alpha\binom{n+r}{r}\,Q_n(x)
\quad\text{for all }n\in\N.
\end{equation}
Observe that if $\alpha=r!(r+1)^{-r}$, then $\seq{Q_n(x)}$ is equal to $\seq{P_n(x)}$ satisfying \eqref{recurrence relation symmetric r-OP hypergeometric case}.
Hence, $\seq{Q_n(x)}$ satisfies the recurrence relation \eqref{recurrence relation symmetric r-OP Appell} for some $\alpha\neq 0$ if and only if
\begin{equation}
Q_n(x)=c^{-n}P_n(cx)
\quad\text{for all }n\in\N,
\quad\text{with }c=\left(\frac{r!}{\alpha(r+1)^r}\right)^{\frac{1}{r+1}}.
\end{equation}
Therefore, part (c) of the theorem also holds, concluding our proof.
\end{proof}

\end{document}